\newcommand{\R}{\mathbb R}
\newcommand{\N}{\mathbb N}
\newcommand{\E}{\mathbb E}
\newcommand{\Pro}{\mathbb P}
\newcommand{\dif}{\,\mathrm{d}}
\newcommand{\ext}{\mathrm{ext}}
\newcommand{\con}{\mathrm{conv}}
\DeclareMathOperator*{\kmax}{k-max}
\DeclareMathOperator*{\ave}{Ave}
\newtheorem{thm}{Theorem}[section]
\newtheorem{cor}[thm]{Corollary}
\newtheorem{lemma}[thm]{Lemma}
\newtheorem{proposition}[thm]{Proposition}
\theoremstyle{remark}
\numberwithin{equation}{section}
\begin{document}

\title[Estimating averages of order statistics]{Estimating averages of order statistics of bivariate functions}

\author{Richard Lechner}
\address{Institute of Analysis, Johannes Kepler University Linz,
  Altenbergerstrasse 69, 4040 Linz, Austria} \email{richard.lechner@jku.at}

\author{Markus Passenbrunner}
\address{Institute of Analysis, Johannes Kepler University Linz,
  Altenbergerstrasse 69, 4040 Linz, Austria} \email{markus.passenbrunner@jku.at}

\author{Joscha Prochno}
\address{Institute of Analysis, Johannes Kepler University Linz,
  Altenbergerstrasse 69, 4040 Linz, Austria} \email{joscha.prochno@jku.at}

\date{\today}

\begin{abstract}
  We prove uniform estimates for the expected value of averages of order statistics of bivariate
  functions in terms of their largest values by a direct analysis.
  As an application, uniform estimates for the expected value of averages of order statistics of
  sequences of independent random variables in terms of Orlicz norms are obtained.
  In the case where the bivariate functions are matrices, we provide
  a ``minimal'' probability space which allows us to $C$-embed certain Orlicz spaces $\ell_M^n$
  into $\ell_1^{cn^3}$, $c,C>0$ being absolute constants.
\end{abstract}

\maketitle

\tableofcontents

\section{Introduction and main results} \label{sec:introduction}

In the series of papers \cite{GLSW1,GLSW2,GLSW3,GLSW4,GLSW5}, sequences of random variables and
their order statistics were studied in several different settings and the obtained results were
applied successfully to the local theory of convex bodies. In \cite{GLSW1},
the authors studied expressions of the form
\begin{equation} \label{eq:k decreasing rearrangement random variables}
  \E\sum_{k=1}^{\ell} \kmax_{1\leq i \leq n} |x_iX_i|, \qquad 1\leq \ell \leq n,
\end{equation}
with independent identically distributed (iid) random variables $X_i$, $i=1,\dots,n$ and real numbers $x_i$, $i=1,\dots,n$. Here, $\kmax_{1\leq i \leq n}X_i(\omega)$ is the $k$-th order statistic of a statistical sample of size $n$, which is equal to its $k$-th largest value. Besides being fundamental tools in statistics with various applications in, e.g., compressed sensing \cite{CRT,LZ}, wireless networks \cite{CCR}, and data streams \cite{ZLYKZY}, order statistics of random samples appear naturally in Banach space theory, e.g., in computations of the distribution of eigenvalues of random matrices \cite{MP, R}, and in calculating sharp bounds for the expectation of the supremum of Gaussian processes indexed by certain interpolated bodies \cite{GLMP}. For general information on order statistics, we refer the reader to \cite{DN}.

Especially we would like to point out that the results that were obtained in \cite{GLSW1}, in particular the estimates for \eqref{eq:k decreasing rearrangement random variables}, were crucial to obtain estimates for various parameters associated to the local theory of convex bodies \cite{GLSW2}, e.g., type and cotype constants, $p$-summing norms, volume ratios, and projection constants.

An integral tool in \cite{GLSW1,GLSW2} and thus in  \cite{GLSW3,GLSW4,GLSW5} are combinatorial estimates going back to S.~Kwapie\'n and C.~Sch\"utt \cite{KS1,KS2}. Those estimates relate an average (over the group of permutations) of the largest order statistic of a matrix $a$ to the average of its largest entries. To be more precise, it was shown that for all $a\in\R^{n\times n}$
\[
\frac{1}{n!} \sum_{\pi\in\mathfrak S_n} \max_{1\leq i \leq n} |a_{i\pi(i)}| \simeq \frac{1}{n}\sum_{k=1}^n s(k),
\]
where $s(k)$ is the $k$-th largest entry of the matrix consisting of the absolute values of $a$ and
$\mathfrak S_n$ is the symmetric group on $\{1,\dots,n\}$.
In \cite{LPP}, this was established in the following setting: under some assumptions on the normalized counting measure $\Pro$ on a collection
$G$ of maps from $I=\{1,\dots,n\}$ to $J=\{1,\dots,N\}$, we have that for every matrix
$a\in \mathbb R^{n\times N}$ and every $\ell \leq n$,
\begin{equation} \label{eq:main discrete case}
  \frac{c}{N} \sum_{j=1}^{\ell N}s(j)
  \leq \int_{G} \sum_{k=1}^{\ell}\kmax\limits_{1\leq i \leq n} |a_{ig(i)}| \dif \Pro(g)
  \leq \frac{C}{N} \sum_{j=1}^{\ell N}s(j),
\end{equation} 
where $c$ and $C$ are positive constants only depending on $G$. Special choices for $G$ so that \eqref{eq:main discrete case} holds, include the symmetric group $\mathfrak S_n$ and $\{1,\dots,n\}^{\{1,\dots,n\}}$. Those estimates were then used to deduce similar combinatorial estimates for $\ell_p$ norms.

In this work we extend our results from \cite{LPP} and study averages of order statistics of
bivariate functions $a:\{1,\dots,n\}\times \Omega \to \R$, where $(\Omega,\mathcal F,\mu)$ is an
arbitrary probability space.
In this setting, $G$ will be a collection of maps from $I=\{1,\dots,n\}$ to the probability space
$(\Omega,\mathcal F, \mu)$.
We denote the decreasing rearrangement of $a$ by $a^*$.
Our main result is the following:

\begin{thm}\label{thm:main}
  Let $n\in\N$, $G$ be a collection of maps from $I=\{1,\dots,n\}$ to the probability space
  $(\Omega,\mu)$, $C_G\geq 1$ be a constant only depending on $G$, and $\Pro$ be a probability
  measure on $G$.
  Assume that for all $i\in I$, all different indices $i_1,i_2\in I$ and all measurable sets
  $A,A_1,A_2\subset \Omega$,
  \begin{enumerate}[(i)]
  \item\label{eq:condition 1} $\Pro(g(i)\in A)=\mu(A)$,
  \item\label{eq:condition 2} $\Pro(g(i_1)\in A_1, g(i_2)\in A_2 ) \leq C_G \mu(A_1)\mu(A_2)$.
  \end{enumerate}
  Then, for every measurable function $a:I\times \Omega\to\mathbb{R}$ and for every $\ell \leq n$, 
  \begin{equation} \label{eq:main result firs part}
    c\cdot \int_0^\ell a^*(t)\dif t \leq \int_{G}  \sum_{k=1}^{\ell}\kmax\limits_{1\leq i \leq n} |a(i,g(i))| \dif \Pro(g) \leq C\cdot \int_0^\ell a^*(t)\dif t,
  \end{equation} 
  where $1/c=48(1 + 2C_G)^2$ and $C = 6 (1 + 2 C_G)$.
\end{thm}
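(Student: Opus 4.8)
The plan is to pass to the ``layer‑cake'' representation of both sides of \eqref{eq:main result firs part} in terms of the distribution function of $|a|$, and thereby reduce the asserted two‑sided estimate to a single pointwise inequality at each level $\lambda$, which then turns out to be an elementary second‑moment statement using \eqref{eq:condition 1} and \eqref{eq:condition 2}.

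For $\lambda\ge 0$ and $i\in I$ set $A_i(\lambda)=\{\omega:|a(i,\omega)|>\lambda\}$, $p_i(\lambda)=\mu(A_i(\lambda))$ and $S(\lambda)=\sum_{i=1}^n p_i(\lambda)$; note that, with the ground measure on $I\times\Omega$ being counting measure on $I$ times $\mu$, the function $S$ is exactly the distribution function of $|a|$, so by equimeasurability of the decreasing rearrangement $\int_0^\ell a^*(t)\dif t=\int_0^\infty\min\{\ell,S(\lambda)\}\dif\lambda$. For $g\in G$ put $N_\lambda(g)=\#\{i:|a(i,g(i))|>\lambda\}=\sum_{i=1}^n\mathbbm 1_{A_i(\lambda)}(g(i))$. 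Since for any finite list of nonnegative numbers the sum of its $\ell$ largest terms equals $\int_0^\infty\min\{\ell,\#\{i:(\cdot)_i>\lambda\}\}\dif\lambda$, Tonelli's theorem gives
\begin{equation*}
  \int_G\sum_{k=1}^\ell\kmax_{1\le i\le n}|a(i,g(i))|\dif\Pro(g)=\int_0^\infty\Big(\int_G\min\{\ell,N_\lambda(g)\}\dif\Pro(g)\Big)\dif\lambda ,
\end{equation*}
so it suffices to prove, for each fixed $\lambda$, that $c\,\min\{\ell,S(\lambda)\}\le\int_G\min\{\ell,N_\lambda(g)\}\dif\Pro(g)\le C\,\min\{\ell,S(\lambda)\}$. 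Fixing $\lambda$ and suppressing it, hypothesis \eqref{eq:condition 1} gives $\Pro(g(i)\in A_i)=p_i$, hence $\int_G N\dif\Pro=S$, and \eqref{eq:condition 2} gives $\int_G\mathbbm 1_{A_i}(g(i))\mathbbm 1_{A_j}(g(j))\dif\Pro\le C_G p_ip_j$ for $i\ne j$, hence $\int_G N^2\dif\Pro\le S+C_G S^2$. The upper bound is then immediate from concavity of $t\mapsto\min\{\ell,t\}$ and Jensen's inequality (it even holds with $C=1$, so there is room to lose constants elsewhere).

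For the lower bound I would distinguish two regimes by comparing $S$ with $\ell/C_G$. When $S$ is small, the pointwise inequality $(N-\ell)^+\le N^2/(4\ell)$ (equivalently $(N-2\ell)^2\ge 0$) yields $\int_G\min\{\ell,N\}\dif\Pro=S-\int_G(N-\ell)^+\dif\Pro\ge S-\tfrac{1}{4\ell}(S+C_GS^2)$, which is at least $\tfrac12 S=\tfrac12\min\{\ell,S\}$ as soon as $S\lesssim\ell/C_G$. When $S\gtrsim\ell/C_G$ (so in particular $S\gtrsim 1/C_G$, since $\ell\ge 1$), I would apply the Paley--Zygmund inequality: $\Pro(N>S/2)\ge\tfrac14\big(\int_G N\dif\Pro\big)^2\big/\int_G N^2\dif\Pro\ge\tfrac14\cdot\frac{S}{1+C_GS}$, and since $S\mapsto S/(1+C_GS)$ is increasing this is $\gtrsim 1/C_G$ on the range $S\gtrsim 1/C_G$; as $\min\{\ell,N\}\ge\min\{\ell,S/2\}\ge\tfrac12\min\{\ell,S\}$ on $\{N>S/2\}$, this gives $\int_G\min\{\ell,N\}\dif\Pro\gtrsim\min\{\ell,S\}/C_G$, now for all $S$ in that range (both $S<\ell$ and $S\ge\ell$). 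Putting the two regimes together and keeping careful track of the numerical constants produces the stated $1/c=48(1+2C_G)^2$; integrating the pointwise inequalities over $\lambda$ finishes the proof.

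\emph{Where the difficulty lies.} The only delicate step is the lower bound when $S$ is comparable to $\ell$: then the overshoot $\int_G(N-\ell)^+\dif\Pro$ need not be small, and Chebyshev's inequality is too weak because $\mathrm{Var}(N)$ may be of order $C_GS^2$. What saves the argument is that hypothesis \eqref{eq:condition 2} forces $\int_G N^2\dif\Pro\lesssim\big(\int_G N\dif\Pro\big)^2$ once $S\gtrsim 1$, i.e.\ it rules out the degenerate ``all or nothing'' behaviour of $N$; Paley--Zygmund is precisely the tool that converts this into a uniform lower bound on $\Pro(N>S/2)$. The measurability needed for Tonelli and the equimeasurability identity for $a^*$ are routine under the standing assumptions on $G$.
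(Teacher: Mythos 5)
Your proof is correct, but it takes a genuinely different and more elementary route than the paper. The paper first replaces $a$ by an auxiliary function $b$ built via Sierpi\'nski's theorem to break ties, introduces the level-set map $h(t)$, the set $U$, the order class $\mathcal A_b$, and a signed-measure partial-summation lemma, and reduces to Boolean functions through a comparison of $\E S(a)$ with $\E S(\widetilde a)$ (Theorem~\ref{thm:average_a}); you instead use the exact layer-cake identities $\sum_{k=1}^{\ell}\kmax_i x_i=\int_0^\infty\min\{\ell,\#\{i:x_i>\lambda\}\}\dif\lambda$ and $\int_0^\ell a^*=\int_0^\infty\min\{\ell,S(\lambda)\}\dif\lambda$, which reduce the theorem to a two-sided estimate for $\E\min\{\ell,N_\lambda\}$ at each fixed level $\lambda$. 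The probabilistic core is the same as the paper's (conditions \eqref{eq:condition 1}--\eqref{eq:condition 2} give $\E N=S$ and $\E N^2\le S+C_GS^2$, and Paley--Zygmund converts this into a lower bound on $\Pro(N\ge S/2)$, exactly as in Propositions~\ref{prop:EXt}--\ref{prop:Xt geq t/2}), but your reduction makes the tie-breaking machinery, the set $U$, and Lemma~\ref{lem:continuous_partial_sums_lemma} unnecessary, since the layer-cake identity is exact for every $g$ and equimeasurability handles the rearrangement side. I checked the details: $(N-\ell)^+\le N^2/(4\ell)$ and the threshold $S\le\ell/C_G$ do give $\E\min\{\ell,N\}\ge S/2$ in the small regime, and Paley--Zygmund with $S\ge\ell/C_G\ge 1/C_G$ gives $\Pro(N\ge S/2)\ge 1/(8C_G)$, hence $\E\min\{\ell,N\}\ge\min\{\ell,S\}/(16C_G)$ in the large regime; Jensen gives the upper bound with constant $1$. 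Thus your argument actually yields \emph{better} constants than the theorem states ($C=1$ and $1/c=16C_G\le 48(1+2C_G)^2$), so your remark that the bookkeeping ``produces'' $48(1+2C_G)^2$ is a harmless overstatement --- the stated inequality follows a fortiori. What the paper's heavier route buys is the intermediate structural statement (the comparison with the averaged function $\widetilde a$, valid simultaneously for the whole order class $\mathcal A_b$); what yours buys is brevity, sharper constants, and no appeal to Sierpi\'nski's theorem or the signed-measure lemma, with only routine joint-measurability checks needed for Tonelli.
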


As a direct consequence, when the bivariate functions are matrices and $\Pro$ is the normalized counting measure on $G$, we obtain one of the main results in \cite[Theorem 1.1]{LPP}:

\begin{cor}\label{thm:main LPP}
Let $n,N\in\N$ and $a\in \R^{n\times N}$. Let $G$ be a collection of maps from $I=\{1,\dots,n\}$ to $J=\{1,\dots,N\}$ and $C_G>0$ be a constant only depending on $G$. Assume that for all $i\in I$, $j\in J$ and all different pairs $(i_1,j_1),(i_2,j_2)\in I\times J$ 
\begin{enumerate}[(i)]
\item\label{eq:condition 1 cor} $\Pro(\{g\in G: g(i)=j\})=1/N$,
\item\label{eq:condition 2 cor} $\Pro(\{g\in G: g(i_1)=j_1, g(i_2)=j_2 \}) \leq C_G/N^2$.
\end{enumerate}
Then, for every $\ell \leq n$, 
\begin{equation} \label{eq:main result firs part cor}
  \frac{c}{N} \sum_{j=1}^{\ell N}s(j) \leq \int_{G} \sum_{k=1}^{\ell}\kmax\limits_{1\leq i \leq n} |a_{ig(i)}| \dif \Pro(g) \leq \frac{C}{N} \sum_{j=1}^{\ell N}s(j),
\end{equation} 
where $1/c = 48(1+2C_G)^{2}$ and $C=6(1+2C_G)$.
\end{cor}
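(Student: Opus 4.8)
The plan is to obtain the corollary as the special case of Theorem~\ref{thm:main} in which $(\Omega,\mathcal F,\mu)$ is $J=\{1,\dots,N\}$ equipped with the normalized counting measure $\mu$, so that $\mu(A)=|A|/N$ for $A\subseteq J$, and in which $\Pro$ is the normalized counting measure on $G$. Under this identification a collection of maps $I\to J$ is literally a collection of maps $I\to\Omega$, and a matrix $a\in\R^{n\times N}$ is precisely the bivariate function $(i,j)\mapsto a_{ij}$ on the finite (hence measurable) space $I\times\Omega$. With this dictionary the middle term of \eqref{eq:main result firs part} is word for word the middle term of \eqref{eq:main result firs part cor}, since $a(i,g(i))=a_{ig(i)}$; so only the two hypotheses and the two outer terms need to be matched up.

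First I would verify that hypotheses \eqref{eq:condition 1 cor} and \eqref{eq:condition 2 cor} imply hypotheses \eqref{eq:condition 1} and \eqref{eq:condition 2} of Theorem~\ref{thm:main}. For \eqref{eq:condition 1}, given $A\subseteq J$ one simply sums: $\Pro(g(i)\in A)=\sum_{j\in A}\Pro(g(i)=j)=|A|/N=\mu(A)$. For \eqref{eq:condition 2}, fix distinct $i_1,i_2\in I$ and sets $A_1,A_2\subseteq J$; since $i_1\neq i_2$, the pairs $(i_1,j_1)$ and $(i_2,j_2)$ are distinct for \emph{every} choice of $j_1\in A_1$, $j_2\in A_2$, so \eqref{eq:condition 2 cor} applies to each summand and
\begin{align*}
  \Pro(g(i_1)\in A_1,\, g(i_2)\in A_2)
  &=\sum_{j_1\in A_1}\sum_{j_2\in A_2}\Pro\bigl(g(i_1)=j_1,\, g(i_2)=j_2\bigr)\\
  &\le \frac{C_G}{N^2}\,|A_1|\,|A_2|=C_G\,\mu(A_1)\mu(A_2).
\end{align*}
I would also record that $C_G\ge 1$ may be assumed: when $n\ge 2$, summing \eqref{eq:condition 2 cor} over $j_2\in J$ for a fixed $i_2\neq i_1$ and using \eqref{eq:condition 1 cor} gives $1/N\le C_G/N$, while for $n=1$ one may freely enlarge $C_G$ to $1$ as this only weakens \eqref{eq:main result firs part cor}. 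Hence Theorem~\ref{thm:main} applies with the very same constant $C_G$, and therefore with the constants $1/c=48(1+2C_G)^2$ and $C=6(1+2C_G)$ appearing in the statement.

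It remains to identify the outer terms, i.e.\ to check that $\int_0^\ell a^*(t)\dif t=\tfrac1N\sum_{j=1}^{\ell N}s(j)$ for every integer $\ell\le n$. On $I\times\Omega$ every point $(i,j)$ is an atom of mass $\mu(\{j\})=1/N$, and $|a|$ takes the value $|a_{ij}|$ there; writing $s(1)\ge\dots\ge s(nN)$ for the decreasing enumeration of the $nN$ numbers $|a_{ij}|$, the decreasing rearrangement of $a$ (a function on $[0,n)$, since the total mass is $n$) is the step function with $a^*(t)=s(j)$ for $t\in[(j-1)/N,\,j/N)$, $j=1,\dots,nN$. As $\ell N$ is an integer and $[0,\ell]=\bigcup_{j=1}^{\ell N}[(j-1)/N,\,j/N)$, integrating gives $\int_0^\ell a^*(t)\dif t=\tfrac1N\sum_{j=1}^{\ell N}s(j)$, and substituting this into \eqref{eq:main result firs part} yields exactly \eqref{eq:main result firs part cor}.

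I do not anticipate a real obstacle, since the corollary is a specialization rather than a new theorem. The only two points that call for a moment of care are the ones highlighted above: that the distinctness of \emph{pairs} in \eqref{eq:condition 2 cor} comfortably covers the case of distinct \emph{indices} needed in \eqref{eq:condition 2}, and the elementary bookkeeping relating the decreasing rearrangement of the matrix, viewed on a space of total mass $n$, to the normalized partial sums $\tfrac1N\sum_{j\le\ell N}s(j)$.
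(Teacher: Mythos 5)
Your proposal is correct and is exactly the route the paper intends: Corollary~\ref{thm:main LPP} is stated there as a direct specialization of Theorem~\ref{thm:main} to $\Omega=J$ with the normalized counting measure, with the hypotheses transferred by summation and $\int_0^\ell a^*(t)\dif t=\frac1N\sum_{j=1}^{\ell N}s(j)$ identified just as you do. Your extra remark that $C_G\geq 1$ is automatic for $n\geq 2$ (and harmless for $n=1$) is a sensible way to reconcile the corollary's ``$C_G>0$'' with the theorem's ``$C_G\geq 1$''.
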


In this work we also present an example of a set of maps, say $G_0$, with a minimal number of elements satisfying conditions \eqref{eq:condition 1 cor} and \eqref{eq:condition 2 cor} in Corollary \ref{thm:main LPP}, thus guaranteeing that the inequalities in \eqref{eq:main result firs part cor} hold. When $N=n$, the cardinality of $G_0$ is $n^2$. The set of maps provided here is based on finite fields of $n$ elements, where $n$ is a power of a prime number. It is not hard to see that if $G$ and $\mathbb P$ satisfy conditions~\eqref{eq:condition 1 cor} and~\eqref{eq:condition 2 cor} of Corollary~\ref{thm:main LPP} for some constant $C_{G} \geq 1$, then $G$ consists of at least $\frac{n^2}{C_{G}}$ elements.

We then apply this result to obtain the following:

\begin{thm}\label{thm:application}
There exist constants $c,C>0$ such that for all $n\in\N$ and every strictly convex, twice differentiable Orlicz function $M:[0,\infty)\to [0,\infty)$ that is strictly $2$-concave and satisfies $M^*(1)=1$, we have that $\ell_M^n\stackrel{C}{\hookrightarrow} \ell_1^{cn^3}$.
\end{thm}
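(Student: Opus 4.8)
The plan is to go from the Orlicz norm to an average of order statistics of an explicit matrix built from $x$ and $M$, feed this into Corollary~\ref{thm:main LPP} with the ``minimal'' set of maps, and then linearize the resulting expression into an $\ell_1$-norm. Here and below, $a\simeq b$ means that $a$ and $b$ agree up to absolute multiplicative constants, and the regularity assumptions on $M$ (strict convexity, twice differentiability, strict $2$-concavity, and $M^*(1)=1$) are invoked only to make every such constant absolute, i.e.\ independent of $M$ and of $n$.

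First I would prove a finite-dimensional Kwapie\'n--Sch\"utt-type estimate for $M$: there is a vector $\gamma=\gamma(M)\in(0,\infty)^n$, described explicitly through $M^*$ (for instance via the increments of $(M^*)^{-1}$ along the grid $\{j/n:1\le j\le n\}$), such that for every $x\in\R^n$
\[
  \|x\|_M\;\simeq\;\frac1n\sum_{k=1}^{n}s(k),
\]
where $s(1)\ge s(2)\ge\cdots$ is the decreasing rearrangement of the matrix $\bigl(|x_i|\,\gamma_j\bigr)_{1\le i,j\le n}$. This is a one-variable statement about $M$; strict convexity/$2$-concavity and the normalization $M^*(1)=1$ are what keep the two constants absolute. (Alternatively, one encodes this through the bivariate function $a(i,t)=|x_i|\,h(t)$ on $\{1,\dots,n\}\times[0,1]$ for an $h$ determined by $M$ and appeals to Theorem~\ref{thm:main} directly; this is also the mechanism behind the Orlicz description of averages of order statistics of independent variables mentioned in the abstract.)

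Next I would apply Corollary~\ref{thm:main LPP} to the matrix $\bigl(|x_i|\,\gamma_j\bigr)$ with $N=n$ and $\ell=1$, using for $G$ the explicit minimal family $G_0$ of affine maps $i\mapsto\alpha i+\beta$ of a field $\mathbb F_n$ with $n$ elements ($n$ a prime power); this $G_0$ satisfies \eqref{eq:condition 1 cor} and \eqref{eq:condition 2 cor} with $C_{G_0}=1$ and has exactly $|G_0|=n^2$ elements. One obtains
\[
  \|x\|_M\;\simeq\;\frac{1}{n^2}\sum_{g\in G_0}\ \max_{1\le i\le n}|x_i|\,\gamma_{g(i)} .
\]
(For general $n$ one first replaces $n$ by the nearest prime power, which only affects absolute constants.)

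Finally — and this is the step I expect to be the main obstacle — one must turn the right-hand side, which is a normalized $\ell_1$-sum of the $n^2$ \emph{weighted sup-norms} $x\mapsto\|(x_i\gamma_{g(i)})_{i}\|_\infty$ and thus only exhibits $\ell_M^n$ as a subspace of $\ell_1^{n^2}(\ell_\infty^n)$, into a genuine linear embedding into $\ell_1$. The place where the $2$-concavity of $M$ becomes indispensable is the claim that along the family $G_0$ these sup-norms may be aggregated in an $\ell_2$- rather than an $\ell_\infty$-fashion, i.e.
\[
  \frac{1}{n^2}\sum_{g\in G_0}\max_{1\le i\le n}|x_i|\,\gamma_{g(i)}
  \;\simeq\;
  \frac{1}{n^2}\sum_{g\in G_0}\Bigl(\sum_{i=1}^{n}\gamma_{g(i)}^{2}\,x_i^{2}\Bigr)^{1/2},
\]
the lower bound being trivial and the upper bound being the substantive inequality: it fails pointwise in $g$, so it must exploit both the averaging over the structured family $G_0$ and the fast decay of $\gamma$ forced by the $2$-concavity of $M$. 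Granting this, each weighted Euclidean norm $(\sum_i\gamma_{g(i)}^2 x_i^2)^{1/2}=\|D_g x\|_2$ with $D_g=\mathrm{diag}(\gamma_{g(i)})_i$ is, up to an absolute constant, the average of $\lceil c_0 n\rceil$ numbers $|\langle v_l,D_g x\rangle|=|\langle D_g v_l,x\rangle|$, where $v_1,\dots,v_{\lceil c_0 n\rceil}$ is a fixed system of independent Gaussian (or Rademacher) vectors realizing the $C$-embedding $\ell_2^n\hookrightarrow\ell_1^{\lceil c_0 n\rceil}$ uniformly on the $n$-dimensional space. Concatenating these functionals over the $n^2$ blocks $g\in G_0$ yields a linear map $T\colon\ell_M^n\to\ell_1^{cn^3}$ with $cn^3=|G_0|\cdot\lceil c_0 n\rceil$ and $\|x\|_M\simeq\|Tx\|_1$, which is the asserted $C$-embedding. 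The whole difficulty is concentrated in the displayed equivalence above: passing from the sup/order-statistics description that the combinatorics naturally supply to the $\ell_2$/$\ell_1$ description required for a linear embedding into $\ell_1$, uniformly over $M$ in the stated class; the extra factor $n$ (hence $n^3$ rather than $n^2$) is precisely the price of that linearization.
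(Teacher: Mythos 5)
Your outline shares the paper's skeleton (a Kwapie\'n--Sch\"utt-type weight vector built from $M$, the minimal family $G_0$ of $n^2$ affine maps over $\mathbb F_n$ with $C_{G_0}=1$, and a final linearization using only $O(n)$ sign/Gaussian functionals per map, so that $n^2\cdot O(n)=O(n^3)$ coordinates suffice; the last step is essentially the paper's use of \eqref{eq:BLM}). But there is a genuine gap exactly where you flag it: the displayed equivalence
\begin{equation*}
  \frac{1}{n^2}\sum_{g\in G_0}\max_{1\le i\le n}|x_i|\,\gamma_{g(i)}
  \;\simeq\;
  \frac{1}{n^2}\sum_{g\in G_0}\Bigl(\sum_{i=1}^{n}\gamma_{g(i)}^{2}x_i^{2}\Bigr)^{1/2}
\end{equation*}
is never proved; you write ``granting this.'' This is not a deferrable reduction: by the $\ell_p$-version of the combinatorial estimates quoted in Section~\ref{sec:minimal set of maps} (\cite[Theorem 1.4]{LPP} with $p=2$), the right-hand side is equivalent to $\frac1n\sum_{k\le n}s(k)+\bigl(\frac1n\sum_{k>n}s(k)^2\bigr)^{1/2}$, while the left-hand side (by Corollary~\ref{thm:main LPP} with $\ell=1$) is equivalent to $\frac1n\sum_{k\le n}s(k)$ alone. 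So your claim amounts to showing that for the specific matrix $(|x_i|\gamma_j)$ the quadratic tail term is dominated by the head term, uniformly in $x$, $n$ and all $M$ in the stated class; this is precisely where strict $2$-concavity (and the normalization $M^*(1)=1$) must be used quantitatively, and it is the analytic heart of the theorem. Without it, only the lower bound $\|Tx\|_1\gtrsim\|x\|_M$ of your final map $T$ is established; the upper bound $\|Tx\|_1\lesssim\|x\|_M$, i.e.\ the boundedness of the embedding, is exactly the missing inequality.

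The paper avoids this issue by never passing through the max/order-statistics description of $\|x\|_M$: it invokes Sch\"utt's theorem \eqref{eq:schuett average}, which under the stated hypotheses on $M$ produces weights $a$ with $\|x\|_M\simeq\frac{1}{n!}\sum_{\pi\in\mathfrak S_n}\bigl(\sum_i|x_ia_{\pi(i)}|^2\bigr)^{1/2}$, i.e.\ the $\ell_2$-aggregated form directly, and then uses \cite[Theorem 1.4]{LPP} at $p=2$ (whose hypotheses hold both for $\mathfrak S_n$ and for $G_0$) to obtain \eqref{eq:equivalence averages}, replacing the $n!$ permutations by the $n^2$ maps of $G_0$; Khintchine plus \eqref{eq:BLM} then linearizes as in your last step. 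If you want to keep your route via Corollary~\ref{thm:main LPP} with $\ell=1$, you must supply a proof of the head-dominates-tail estimate for your $\gamma$ (in effect re-proving the relevant part of Sch\"utt's result); as written, the proposal does not prove the theorem.
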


We also provide an application of Theorem~\ref{thm:main} to sequences of iid random variables to obtain
estimates for \eqref{eq:k decreasing rearrangement random variables}. Those estimates are in terms
of Orlicz norms and recover Corollaries $2$ and $3$ of \cite{GLSW1}. To be more precise, we prove the following:

\begin{thm}\label{thm:main sequences of random variables}
  Let $X_1,\dots,X_n$ be a sequence of iid random variables with $\E |X_1|<\infty$. Let $1\leq \ell \leq n$ and $M$ be the N-function given by
  \begin{equation} \label{eq:definition M star}
    M^*\Big(\int_0^\beta X^*(z) \dif z \Big) = \frac{\beta}{\ell},\qquad 0\leq \beta \leq 1.
  \end{equation}
  Then, for all $x\in\R^n$,
  \[
  c \|x\|_M \leq \E \sum_{k=1}^{\ell} \kmax_{1\leq i \leq n} |x_iX_i| \leq C \|x\|_M,
  \]
  where $c,C>0$ are absolute constants.
\end{thm}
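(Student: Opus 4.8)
\emph{Proof strategy for Theorem~\ref{thm:main sequences of random variables}.}
The plan is to deduce the estimate from Theorem~\ref{thm:main} by choosing a suitable probability space and bivariate function, and then to recognise the resulting quantity $\int_0^\ell a^*$ as an Orlicz norm.

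\emph{Step 1: reduction to Theorem~\ref{thm:main}.} Let $X^*\colon[0,1]\to[0,\infty)$ denote the decreasing rearrangement of $|X_1|$; then $X^*$ has the distribution of $|X_1|$ and $\int_0^1 X^*(z)\dif z=\E|X_1|<\infty$. I take $(\Omega,\mu)=([0,1],\mathrm{Leb})$ and let $G$ be the set of \emph{all} maps $g\colon I\to[0,1]$, identified with $[0,1]^n$ and equipped with the product Lebesgue measure $\Pro$. For fixed $i$ the coordinate $g\mapsto g(i)$ is uniform on $[0,1]$, so~\eqref{eq:condition 1} holds, and for $i_1\ne i_2$ the coordinates $g(i_1),g(i_2)$ are independent, so~\eqref{eq:condition 2} holds with $C_G=1$. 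Put $a(i,t):=x_iX^*(t)$. Under $\Pro$ the tuple $(g(1),\dots,g(n))$ is uniform on $[0,1]^n$, hence $\bigl(x_1X^*(g(1)),\dots,x_nX^*(g(n))\bigr)$ has the same law as $(x_1X_1,\dots,x_nX_n)$, and therefore
\[
\int_G\sum_{k=1}^\ell\kmax_{1\le i\le n}|a(i,g(i))|\dif\Pro(g)=\E\sum_{k=1}^\ell\kmax_{1\le i\le n}|x_iX_i|.
\]
By Theorem~\ref{thm:main} this quantity lies between $c_1\int_0^\ell a^*(t)\dif t$ and $C_1\int_0^\ell a^*(t)\dif t$ with $c_1=1/432$ and $C_1=18$.

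\emph{Step 2: identifying $\int_0^\ell a^*$ with $\|x\|_M$.} I use the $K$-functional identity $\int_0^\ell f^*=\inf_{\lambda\ge0}\bigl(\ell\lambda+\int(f-\lambda)_+\bigr)$, valid for $f\ge0$ on $I\times[0,1]$ with its counting-times-Lebesgue measure of total mass $n\ge\ell$. Since $|a(i,t)|=|x_i|X^*(t)$, the change of variables $\int_0^1(|x_i|X^*(t)-\lambda)_+\dif t=\lambda\,\phi(|x_i|/\lambda)$ with $\phi(u):=\int_0^1(uX^*(t)-1)_+\dif t$ gives
\[
\int_0^\ell a^*(t)\dif t=\inf_{\lambda>0}\lambda\Bigl(\ell+\sum_{i=1}^n\phi(|x_i|/\lambda)\Bigr).
\]
The key point is the identity $\phi(u)=\ell\,M(u/\ell)$. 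Writing $F(\beta)=\int_0^\beta X^*(z)\dif z$, the defining relation $M^*(F(\beta))=\beta/\ell$ for $\beta\in[0,1]$ (with $M^*$ extended by $+\infty$ beyond $F(1)=\E|X_1|$) yields, upon taking the conjugate, $M(s)=\sup_{0\le\beta\le1}\bigl(sF(\beta)-\beta/\ell\bigr)$. Concavity of $F$ shows the supremum is attained at $\beta^*(s)=|\{z: X^*(z)>1/(s\ell)\}|$, so $M(s)=sF(\beta^*(s))-\beta^*(s)/\ell$; on the other hand, monotonicity of $X^*$ gives directly $\phi(u)=uF(b(u))-b(u)$ with $b(u)=|\{t: X^*(t)>1/u\}|$. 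Since $b(\ell s)=\beta^*(s)$, comparing the two formulas gives $\phi(\ell s)=\ell M(s)$. Substituting $\phi(|x_i|/\lambda)=\ell M(|x_i|/(\ell\lambda))$ above and setting $\nu=\ell\lambda$ turns the infimum into $\inf_{\nu>0}\nu\bigl(1+\sum_i M(|x_i|/\nu)\bigr)$, which is precisely the Amemiya representation of the Orlicz norm of $x$ with respect to $M$; this is equivalent to the Luxemburg norm $\|x\|_M$ within a factor $2$. Together with Step~1 this proves the theorem with absolute constants (one may take $c=1/432$ and $C=36$), and recovering Corollaries~$2$ and~$3$ of~\cite{GLSW1} is then a matter of specialising $X_i$ and unwinding the definition of $M$.

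\emph{Main obstacle.} The genuinely delicate step is the identity $\phi(u)=\ell M(u/\ell)$: one must treat flat pieces of $X^*$ --- equivalently atoms of $|X_1|$, and both cases $\|X_1\|_\infty<\infty$ and $\|X_1\|_\infty=\infty$ --- so that the maximiser $\beta^*(s)$ and the level-set function $b(u)$ match, and keep track of the effective domain of $M^*$; in particular $M$ may vanish on a neighbourhood of $0$ when $|X_1|$ is bounded, which is harmless for the Orlicz norm but should be noted. Everything else (the product-space construction, conditions~\eqref{eq:condition 1}--\eqref{eq:condition 2} with $C_G=1$, and the $K$-functional formula) is routine.
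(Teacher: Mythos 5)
Your argument is correct, and its second half is genuinely different from the paper's. Step 1 coincides with the paper: there too the independence is realized on the product space $G=\Omega^n$ with $\Pro=\mu^{\otimes n}$, $a(i,\omega)=x_iX(\omega)$, and Theorem~\ref{thm:main} is applied with $C_G=1$ (your transfer to $([0,1],\mathrm{Leb})$ via $X^*$ is an immaterial variant, noting that only $|x_iX_i|$ enters, so equality of laws of absolute values suffices). From there the paper proceeds by writing $\int_0^\ell a^*(t)\dif t=\sup_{\sum\alpha_i=\ell}\sum_i|x_i|\int_0^{\alpha_i}X^*(t)\dif t$, and then invokes the continuous Kwapie\'n--Sch\"utt lemma (Lemma~\ref{lem:extreme points balls}, $B\subset B_{M^*}\subset 3B$) together with the duality inequality \eqref{eq:dualOrlicz} to identify this supremum with $\|x\|_M$ up to absolute constants. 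You instead evaluate $\int_0^\ell a^*$ through the $L_1$--$L_\infty$ truncation ($K$-functional) formula, verify $\phi(u)=\ell M(u/\ell)$ by matching the maximiser $\beta^*(s)$ of the conjugate with the level-set function $b(u)$, and land exactly on the Amemiya expression $\inf_{\nu>0}\nu\bigl(1+\sum_iM(|x_i|/\nu)\bigr)$, which you compare with the Luxemburg norm within a factor $2$. What your route buys: it bypasses Lemma~\ref{lem:extreme points balls} and the duality \eqref{eq:dualOrlicz} (the latter formally requires $M$ to be an $N$-function, whereas the $M$ defined by \eqref{eq:definition M star} is asymptotically linear and may vanish near $0$, issues your Amemiya comparison tolerates), yields an exact identity rather than a three-sided equivalence, and produces explicit constants. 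What it costs is precisely the delicate bookkeeping you flag: flat pieces of $X^*$, and the convention $M^*=+\infty$ beyond $\E|X_1|$ --- a convention the paper's own Lemma~\ref{lem:extreme points balls} implicitly relies on as well (otherwise $B_{M^*}\subset 3B$ can fail for $\ell\geq 2$), so you are not assuming more than the paper does; it would be worth making that convention and the flat-piece argument fully explicit in a final write-up.
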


Let $M^*$ be given as in \eqref{eq:definition M star}. Then, for all $s\geq 0$,
\begin{equation*} 
  M(s) = \int_0^s \int_{|X| \geq 1/(t\ell)} |X| \dif \Pro \dif t.
\end{equation*}
For $\ell=1$, this was shown in \cite[pp. 4-5]{GLSW5}. A simple calculation shows that it holds for
general $\ell$ as well.
Therefore, we indeed obtain Corollaries $2$ and $3$ of \cite{GLSW1}.

While in~\cite{GLSW1} the proof involves estimates for the largest order statistic of a matrix and
makes use of combinatorial results of \cite{KS1,KS2} in a crucial way, our approach is based on a
purely probabilistic and direct analysis of~\eqref{eq:k decreasing rearrangement random variables}
(Theorem~\ref{thm:main}), and is interesting in its own right.

The organization of the paper is as follows. Section \ref{sec:prel} serves the purpose of introducing notations and preliminary results that we use throughout the paper, where the measure theoretic ones are especially important for the proof of the main theorem. Section \ref{sec: proof main} contains the proof of Theorem~\ref{thm:main}. This is done by reducing the problem to the case of functions only taking values in $\{0,1\}$ and showing the result for this subclass of functions. 
Section~\ref{sec:applications} contains the application of Theorem~\ref{thm:main} to sequences of iid random variables and thus the proof of Theorem~\ref{thm:main sequences of random variables}.
In Section~\ref{sec:minimal set of maps}, we present the minimal set of maps $G_0$ needed to guarantee the inequalities in \eqref{eq:main result firs part cor} and give a proof of the embedding result Theorem \ref{thm:application} .

\section{Notation and preliminaries}\label{sec:prel}

Given a random variable $X:\Omega\to\R$ on a measure space $(\Omega,\mathcal A,\mu)$, we define its distribution function $F_X(t):= \mu(\{\omega\in\Omega: |X(\omega)| > t\})$, $t \geq 0$. The decreasing rearrangement of $X$ is then defined for all $t\geq 0$ by
\[
X^*(t) = \inf \{s\geq 0: F_X(s) \leq t \},
\]
where we use the convention that $\inf \emptyset = \infty$. Note that if $F_X$ is continuous and strictly decreasing, then $X^*$ is simply the inverse of $F_X$. Moreover, notice that $X$ and $X^*$ are equimeasurable, i.e., 
\[
\mu(X\in A)=\lambda(X^*\in A)
\]
for all measurable subsets $A\subset \mathbb R$, where $\lambda$ denotes the Lebesgue measure. By $\mathbbm 1_A$ we denote the characteristic function of a set $A$.

A convex function $M:[0,\infty)\rightarrow[0,\infty)$ is called an Orlicz function, if $M(0)=0$ and if $M$ is not constant. An Orlicz function (as we define it) is bijective and continuous on $[0,\infty)$. 
Given an Orlicz function $M$, we define its conjugate function $M^*$ via the Legendre transform,
\[
M^*(x) = \sup_{t\in[0,\infty)}\big(xt-M(t)\big).
\]
We have $M=M^{**}$, since the Legendre transform is an involution. Note that $M^*$ is again an Orlicz function if $M$ is an $N$-function, i.e., if additionally
\[
\lim_{x\to 0}M(x)/x = 0\qquad\text{and}\qquad \lim_{x\to \infty}M(x)/x=\infty.
\]
Given an Orlicz function $M$ and a measure space $(\Gamma,\psi)$, the Orlicz space $L_M(\Gamma)$ is the space of all (equivalence classes of) measurable, real valued functions $f$ on $\Gamma$ such that
\[
\int_{\Gamma} M(|f|/\lambda) \dif\psi <\infty,
\]
for some $\lambda>0$. We equip $L_M(\Gamma)$ with the Luxemburg norm
\[
\|f\|_M = \inf\bigg\{\lambda >0 : \int_{\Gamma} M(|f|/\lambda) \dif\psi\leq 1\bigg\}.
\]
The closed unit ball of the space $L_M$ will be denoted by $B_M$.
Note also that if $M$ is an $N$-function, we have
\begin{equation}\label{eq:dualOrlicz}
  \|f\|_{M}\leq \sup_{g\in B_{M^*}} \int_{\Gamma} f\cdot g \dif \psi \leq 2\|f\|_M.
\end{equation}
For a detailed and thorough introduction to Orlicz spaces, cf. eg. \cite{RR} or \cite{LT77}.

Another result we will use is Paley-Zygmund's inequality:
\begin{thm}[Paley-Zygmund] \label{thm:paley-zygmund}
  For every non-negative random variable $Z$ and every number $0<\theta<1$, we have
  \begin{equation*} 
    \Pro(Z\geq \theta \cdot\E Z)\geq (1-\theta)^2\frac{(\E Z)^2}{\E Z^2}.
  \end{equation*}	
\end{thm}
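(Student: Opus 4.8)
The plan is the classical short argument for this inequality: split $\E Z$ according to whether $Z$ falls below the threshold $\theta\,\E Z$ or not, bound the low part trivially and the high part by the Cauchy--Schwarz inequality, and then solve the resulting estimate for the probability $\Pro(Z\ge\theta\,\E Z)$.

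In detail, one may assume $0<\E Z<\infty$ and $\E Z^2<\infty$: if $\E Z^2=\infty$ the right-hand side is $0$ and the inequality is trivial, and if $\E Z=0$ then $Z=0$ almost surely and the statement is vacuous. Set $A=\{Z\ge\theta\,\E Z\}$, so that $\E Z=\E[Z\,\mathbbm 1_{A^c}]+\E[Z\,\mathbbm 1_{A}]$. On $A^c$ we have $Z<\theta\,\E Z$, hence $\E[Z\,\mathbbm 1_{A^c}]\le\theta\,\E Z\cdot\Pro(A^c)\le\theta\,\E Z$. For the other term, Cauchy--Schwarz together with $\mathbbm 1_A^2=\mathbbm 1_A$ gives $\E[Z\,\mathbbm 1_{A}]\le(\E Z^2)^{1/2}\,\Pro(A)^{1/2}$. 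Adding the two bounds, $(1-\theta)\,\E Z\le(\E Z^2)^{1/2}\,\Pro(A)^{1/2}$; since $0<\theta<1$ and $\E Z>0$, both sides are strictly positive, so squaring and dividing by $\E Z^2$ yields $\Pro(A)\ge(1-\theta)^2(\E Z)^2/\E Z^2$, which is the assertion.

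There is no real obstacle here: the argument is a one-line application of Cauchy--Schwarz after the elementary splitting of $\E Z$, and the only point requiring a word is the preliminary reduction to the non-degenerate case $0<\E Z\le\E Z^2<\infty$. (Alternatively, one can replace the Cauchy--Schwarz step by an application of Markov's inequality to $Z/\E Z$, but the estimate above is the most direct route to the stated constant $(1-\theta)^2$.)
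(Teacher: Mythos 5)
Your proof is correct and is the standard argument (split $\E Z$ at the threshold $\theta\,\E Z$, bound the upper part by Cauchy--Schwarz, and rearrange); the paper simply quotes Paley--Zygmund as a known result without proof, so there is nothing to compare beyond noting that your write-up, including the reduction to the non-degenerate case $0<\E Z\le \E Z^2<\infty$, is complete and would serve as a proof here.
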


Moreover, we will need the following measure theoretic results:

\begin{thm}[Sierpi\'nski]\label{thm:sierpinski}
  Let $(R,\mathcal R,\rho)$ be a non-atomic measure space with $\rho(R)=c$. Then there exists a function $f:[0,c]\to \mathcal R$ satisfying 
  \begin{enumerate}[(i)]
  \item $f(t)\subset f(s)$ for $0\leq t\leq s\leq c$, 
  \item $\rho(f(s))=s$ for $0\leq s\leq c$.
  \end{enumerate}
\end{thm}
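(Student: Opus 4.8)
The plan is to deduce the statement from the classical \emph{Darboux (intermediate value) property} of non-atomic finite measures, and then to build $f$ by dyadic bisection followed by a completion step. Thus the proof splits into three parts: (a) a lemma asserting that the range of $\rho$ on the measurable subsets of any $A\in\mathcal R$ is the whole interval $[0,\rho(A)]$; (b) the construction of $f$ on the dyadic points of $[0,c]$ by repeated bisection; (c) the extension of $f$ to all of $[0,c]$.

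For (a) I would proceed in two steps. First, every $A$ with $\rho(A)>0$ contains, for each $\varepsilon>0$, a measurable set of measure in $(0,\varepsilon]$: non-atomicity lets us split off from the current set a measurable piece of at most half its measure (and still positive), and iterating, the measure drops below $\varepsilon$ after finitely many steps while staying positive. Second, fix $A$ and $0\le t\le\rho(A)$ and apply Zorn's lemma to the collection of chains (ordered by inclusion) of measurable subsets of $A$, each of measure $\le t$. It is cleanest to argue in the measure algebra $\mathcal R/\mathcal N$ (sets modulo null sets), on which $\rho$ induces a strictly positive measure and which satisfies the countable chain condition because $\rho$ is finite; there a maximal such chain $\mathcal C$ contains its own supremum $b^{\circ}$ (the supremum is attained along a countable increasing subchain, hence has measure $\le t$ and lies above every element of $\mathcal C$, so by maximality belongs to $\mathcal C$). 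If $\rho(b^{\circ})<t$, the first step applied to the complement of $b^{\circ}$ below $[A]$ (which has measure $\ge t-\rho(b^{\circ})>0$) yields a positive-measure element that can be adjoined to $\mathcal C$, contradicting maximality; hence $\rho(b^{\circ})=t$, and any honest representative $B\subseteq A$ of $b^{\circ}$ satisfies $\rho(B)=t$.

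For (b) and (c): set $f(0)=\emptyset$ and $f(c)=R$, and recursively, if $a<b$ are consecutive dyadic points of a given level with $f(a)\subseteq f(b)$, $\rho(f(a))=a$, $\rho(f(b))=b$, apply the lemma to $f(b)\setminus f(a)$ (measure $b-a$) to obtain $D\subseteq f(b)\setminus f(a)$ with $\rho(D)=\tfrac{b-a}{2}$ and put $f(\tfrac{a+b}{2})=f(a)\cup D$; then $f(a)\subseteq f(\tfrac{a+b}{2})\subseteq f(b)$ and the measures are correct, so after all levels $f$ is defined and nested on the dyadic set $D_c=\{kc2^{-n}:n\ge 0,\ 0\le k\le 2^n\}$ with $\rho(f(q))=q$. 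Finally put $f(s)=\bigcup\{f(q):q\in D_c,\ q\le s\}$ for arbitrary $s\in[0,c]$: monotonicity is immediate, $f(s)$ is measurable as a countable union, $f$ agrees with the skeleton on $D_c$, and $\rho(f(s))=s$ by continuity of $\rho$ from below along a sequence of dyadic points increasing to $s$. The one genuine obstacle is part (a), specifically the exact-value statement: this is where non-atomicity is really used, and it requires a maximality argument together with some care about null sets (handled by passing to the measure algebra). As a remark, one could instead take a maximal chain of measurable subsets of $R$ itself and show directly that $\rho$ maps it onto $[0,c]$ — the image is a closed subset of $[0,c]$ containing $0$ and $c$, and a gap would again contradict maximality — but the bisection version has a cleaner completion step.
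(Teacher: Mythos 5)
Your argument is correct. Note that the paper does not prove this statement at all: it is quoted as a classical theorem of Sierpi\'nski and used as a black box (in Proposition~\ref{pro:construction-b}), so there is no in-paper proof to compare against; your write-up is a valid self-contained proof of the cited result. The three steps hold up under scrutiny: (a) the exact-value (Darboux) property is the only place non-atomicity and a genuine maximality argument are needed, and your treatment is sound --- the halving argument gives subsets of arbitrarily small positive measure (using $\rho(R)=c<\infty$), and in the measure algebra a maximal chain contains its supremum, since by finiteness of $\rho$ the supremum of the chain is realized along a countable increasing subchain (one can also argue directly: pick $b_n$ in the chain with $\rho(b_n)\uparrow\sup_{b\in\mathcal C}\rho(b)$ and check their join dominates every element of $\mathcal C$ modulo null sets), after which a gap $\rho(b^\circ)<t$ contradicts maximality; (b) the dyadic bisection preserves nestedness and the prescribed measures; (c) the completion $f(s)=\bigcup\{f(q):q\in D_c,\,q\le s\}$ is a countable union, hence measurable, and continuity of $\rho$ from below gives $\rho(f(s))=s$ at non-dyadic $s$, while at dyadic $s$ the union collapses to the skeleton value. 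The alternative you sketch at the end (a maximal chain in $\mathcal R$ whose $\rho$-image must be all of $[0,c]$ because the image is closed and a gap would contradict maximality) is the other standard route and is equally fine; your bisection version indeed makes the final monotone-extension step cleaner.
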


Sierpi\'nski's theorem allows us to construct to a given measurable function a new one that is constant only on sets of measure zero and which has the same ordering. 

\begin{proposition}\label{pro:construction-b}
  Let $(R,\mathcal R,\rho)$ be a finite measure space.
  For every measurable function $a:R\to[0,\infty)$ there exists a measurable function
  $b:R\to [0,\infty)$ with the following properties:
  \begin{enumerate}[(i)]
  \item \label{it:b1} for all $x\in[0,\infty)$ we either have $\rho( b=x )=0$ or $\{b=x \}$ is an atom.
  \item \label{it:b2} for all $s,t\in R$, we have $a(s)>a(t)$ implies that $b(s) > b(t)$.  
  \end{enumerate} 
\end{proposition}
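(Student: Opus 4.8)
The plan is to construct $b$ explicitly: assign to each level set $\{a=x\}$ its own interval of values, arrange these intervals so their order matches the order of the numbers $x$, and then ``spread'' each level set of positive measure inside its interval by Sierpi\'nski's theorem, while keeping atoms as single values.

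First I would list the (at most countably many, since $\rho$ is finite) numbers $x_j$, $j\in J$, for which $P_j:=\{a=x_j\}$ has positive $\rho$-measure; together with $Z:=R\setminus\bigcup_{j\in J}P_j=\{\omega\in R:\rho(\{a=a(\omega)\})=0\}$ these form a countable measurable partition of $R$. By the standard decomposition of a finite measure space I would split each $P_j$ into its non-atomic part $P_j^{0}$ and its atoms $A_{j,1},A_{j,2},\dots$ (note that $a$ is identically equal to $x_j$ on $P_j$). Next I would build an auxiliary function $g\colon R\to[0,\infty)$: put $g\equiv 0$ on $Z$; on $P_j^{0}$ apply Theorem~\ref{thm:sierpinski} to $(P_j^{0},\rho|_{P_j^{0}})$ to obtain an increasing family $(f_{j,s})_{0\le s\le\rho(P_j^{0})}$ with $\rho(f_{j,s})=s$ and set $g(\omega):=\inf\{s:\omega\in f_{j,s}\}$, so that $g|_{P_j^{0}}$ transports $\rho|_{P_j^{0}}$ to Lebesgue measure and in particular $\rho(\{g|_{P_j^{0}}=s\})=0$ for every $s$; on each atom $A_{j,k}$ let $g$ be a constant chosen in $(\rho(P_j^{0}),\rho(P_j))$, with the constants pairwise distinct over $k$. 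Then $g$ is measurable, $0\le g(\omega)\le\rho(\{a=a(\omega)\})$ for every $\omega$, and on each $P_j$ every level set of $g$ is either null or a single atom. Finally I would set
\[
  b(\omega):=\rho\big(\{a<a(\omega)\}\big)+g(\omega)+a(\omega),\qquad\omega\in R,
\]
which is measurable because $x\mapsto\rho(\{a<x\})$ is monotone.

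Property~(ii) would then be a one-line estimate: if $a(s)=x'>x=a(t)$, then, using $0\le g(t)\le\rho(\{a=x\})$ and $g(s)\ge 0$,
\[
  b(t)\le\rho(\{a\le x\})+x\le\rho(\{a<x'\})+x<\rho(\{a<x'\})+x'\le b(s).
\]
For property~(i), on $P_j$ one has $b=c_j+g$ with $c_j:=\rho(\{a<x_j\})+x_j$ a constant, so the values of $b$ on $P_j$ lie in $I_j:=[c_j,c_j+\rho(P_j))$; since $c_j+\rho(P_j)=\rho(\{a\le x_j\})+x_j<\rho(\{a<x_{j'}\})+x_{j'}=c_{j'}$ whenever $x_j<x_{j'}$, the intervals $I_j$ are pairwise disjoint. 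Hence, for any value $v$, the set $\{b=v\}$ meets at most one $P_j$ (where it is null or a single atom) together with $\{b=v\}\cap Z$, and the latter is null: on $Z$ we have $b=\phi\circ a$ with $\phi(x):=\rho(\{a<x\})+x$ strictly increasing, so $\{b=v\}\cap Z\subseteq\{a=\phi^{-1}(v)\}$, which has measure zero unless it equals some $P_j$, and $P_j\cap Z=\emptyset$. Adjoining a null set to a single atom again yields an atom, so (i) follows.

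The conceptual obstacle is that the three requirements pull against one another: $b$ must be strictly increasing along $a$, atoms of $\rho$ must remain atoms, and every other value must be attained on a null set. The rank function $\omega\mapsto\rho(\{a<a(\omega)\})$ plus Sierpi\'nski-spreading inside level sets handles the first and third on the part of $R$ where $\rho(\{a<\cdot\})$ is strictly increasing, but it collapses on the (possibly non-null) set $Z$ where $\rho(\{a<\cdot\})$ is locally constant; adding the term $a(\omega)$ — any strictly increasing function of $a(\omega)$ would serve — repairs exactly this defect. Everything else is bookkeeping: keeping the ranges $I_j$ disjoint and giving distinct atoms within one level set distinct $g$-values so that they are not merged.
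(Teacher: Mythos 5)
Your construction is correct, and it rests on the same two pillars as the paper's proof: list the (countably many) values whose level sets have positive measure, decompose each such level set into atoms plus a non-atomic part, give the atoms pairwise distinct constant values, and use Sierpi\'nski's theorem to spread the non-atomic part so that its level sets are $\rho$-null. Where you genuinely differ is the ordering and gluing mechanism: the paper enumerates the exceptional values $t_j$ in \emph{decreasing} order and creates room between consecutive level sets by adding the dyadic offsets $d_j=\sum_{i\geq j}2^{-i}$ to $a$, whereas you add the rank function $\rho(\{a<a(\omega)\})$ (plus $a(\omega)$ itself, to retain strict monotonicity where the rank is locally constant), which makes the value ranges of the different level sets disjoint and yields (ii) by your one-line chain of inequalities. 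This buys robustness: no enumeration of the exceptional values is needed at all -- in particular you never need them to be listable in decreasing order, which the paper's bookkeeping implicitly presupposes -- and both the disjointness of the intervals $I_j$ and property (ii) become direct inequalities rather than case distinctions indexed by $j$. Two cosmetic points, both present in essentially the same form in the paper's own proof: on a $\rho$-null subset of $P_j^{0}$ the infimum defining $g$ may run over the empty set, so you should cap $g$ by $\rho(P_j^{0})$ there (or replace $f_{j,\rho(P_j^{0})}$ by all of $P_j^{0}$) to keep $b$ finite; and in the atomless case the value $c_j+\rho(P_j^{0})=c_j+\rho(P_j)$ can be attained, so one should argue with the closed intervals $[c_j,c_j+\rho(P_j)]$ -- your strict inequality $c_j+\rho(P_j)<c_{j'}$ keeps even these pairwise disjoint, so nothing else changes.
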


\begin{proof}
  Before we begin with the construction of the function $b$ satisfying properties \eqref{it:b1} and \eqref{it:b2}, we sketch its idea.
  \begin{figure}[hbt]
      \centering
      \includegraphics{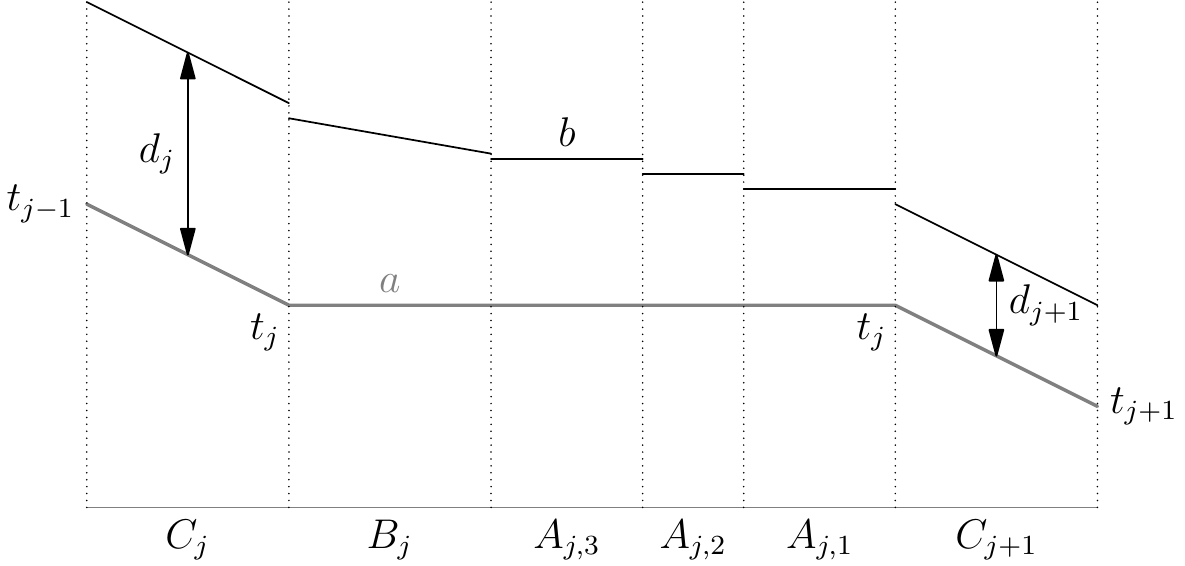}
      \caption{Construction of $b$.}
      \label{fig:1}
    \end{figure}
  
  First, we consider the sets $\{a = t_j\}$ for those $t_j$ such that
  $\rho(a = t_j) > 0$.
  We decompose each of those sets into atoms $A_{j,k}$ of $\mathcal R$ and a continuous part $B_j$.
  We define our function $b$ on $\{a = t_j\}$ in such way that it takes different values on each of
  the atoms $A_{j,k}$. On the continuous part $B_j$, it is defined in such a way that $\rho(\{b = t\}\cap B_j) = 0$ for all $t$, where we use Sierpi\'nski's theorem.

  Let
  $(t_j)_{j\in\mathcal N}$ be the decreasing sequence of all numbers $t$ such that
  \[
  \rho(a=t)>0.
  \]
  Note that there are at most countably many $t$'s with this property, i.e., we can choose
  $\mathcal N = \{1,\dots,N\}$ for some $N\in\N$ or $\mathcal N=\N$.
  Additionally, we set $t_0=\infty$ and define the sets
  \begin{equation*}
    C_j = \{t_j< a <t_{j-1} \},\ j \in \mathcal N
    \qquad\text{and}\qquad
    D = \{a<\inf_{j \in\mathcal N} t_j \}.
  \end{equation*}
  
  First, we specify the function $b$ on the sets $C_j$, $j\in\mathcal N$ and $D$ by
  \begin{equation} \label{eq:offset}
    b(s) = a(s) + d_j,\, s\in C_j
    \qquad\text{and}\qquad
    b(s) = a(s),\, s\in D,
  \end{equation}
  where $d_j = \sum_{\substack{ i\geq j\\ i\in\mathcal N}} 2^{-i}$.
  Note that we have
  \[
  \{a=t_j\} = \bigcup_{k\in\mathcal M_j} A_{j,k} \cup B_j,\qquad j\in\mathcal N,
  \]
  where $\{A_{j,k}\}$ are atoms, $B_j$ is non-atomic and $\mathcal M_j$ is either $\{1,\dots,M_j\}$
  or $\N$.
  The offset $d_j$ introduced in~\eqref{eq:offset} allows us now to define $b$ on $\{a=t_j\}$ such
  that it takes different values on each of the atoms $A_{j,k}$ and such that
  $\rho(\{b = t\}\cap B_j) = 0$ for all $t$.
  Second, we define
  \[
  b(s) = a(s) +d_{j+1} +
  \frac{d_j - d_{j+1}}{2}(1-2^{-k}),\qquad s\in A_{j,k},
  \]
  on each of the atoms $A_{j,k}$.
  In order to define $b$ on the remainder $B_j$ of the set $\{a=t_j\}$ we the invoke Sierpi\'nski's
  Theorem to obtain an increasing function
  $f_j:[0,\rho(B_j)]\to \mathcal R$ such that $\rho(f_j(y))=y$ for all $y\in[0,\rho(B_j)]$ and
  define
  \[
  b(s) = a(s) +d_{j+1} + \frac{d_j - d_{j+1}}{2}
  + \frac{d_j - d_{j+1}}{3\rho(B_j)}\cdot \inf\{\alpha: f_j(\alpha)\ni s\},
  \qquad s\in B_j.
  \]
  Since $b$ satisfies condition~\eqref{it:b2} by construction, it is left to show that it
  satisfies~\eqref{it:b1} as well.
  To this end, let $x \in [0,\infty)$. 
  Note that
  \begin{equation*}
    \{b = x\} = \{a = x - d_j\},
    \qquad \text{whenever $\{b = x\}\cap C_j \neq \emptyset$}.
  \end{equation*}
  Since $t_j < x - d_j < t_{j-1}$, we know $\rho(\{ b = x \}) = 0$. 
  Furthermore, observe that if $\{b = x\} \cap D \neq \emptyset$, then
  $\rho(b=x) = \rho(a=x) = 0$.
  Second, note that if there exist indices $j
  \in \mathcal N$ and $k \in \mathcal M_j$ such that $\{b=x\}\cap A_{j,k} \neq \emptyset$, then $\{b
  = x\} = A_{j,k}$ by construction.  Since $A_{j,k}$ is an atom, so is $\{ b=x \}$.  Third, assume
  there exists an index $j \in \mathcal N$ such that $\{ b = x \}\cap B_j \neq \emptyset$.  Note
  that by construction $\{b = x\} \subset B_j$, thus there exists some number $x'$ such that
  \begin{equation*}
    \{b = x\} = \{z \in B_j\, :\, \inf\{ \alpha\, :\, f_j(\alpha) \ni z \} = x'\}.
  \end{equation*}
  Hence, for all $\varepsilon > 0$ we have
  \begin{equation*}
    \{b = x\} \subset f_j(x'+\varepsilon)\setminus f_j(x'-\varepsilon),
  \end{equation*}
  and, by the properties of $f_j$, we conclude $\rho(b = x) = 0$.
\end{proof}

\begin{lemma}\label{lem:continuous_partial_sums_lemma}
  Let $([0,\alpha),\mathcal B,\kappa)$ be a finite, signed measure space satisfying
  \begin{equation} \label{eq:lem:continuous_partial_sums_lemma}
    \kappa[0,t) \geq 0,
    \qquad 0 \leq t \leq \alpha.
  \end{equation}
  Let $f \in L_1(\kappa)$ be non-negative and decreasing.
  Then
  \begin{equation*}
    \int_{[0,\alpha)} f(t) \dif \kappa(t) \geq 0.
  \end{equation*}
\end{lemma}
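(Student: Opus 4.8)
The plan is to prove this by an integration-by-parts (Abel summation in the continuous setting) argument, writing $f$ as an average of indicator functions of initial segments and using the hypothesis $\kappa[0,t)\geq 0$ for each such segment. Concretely, since $f$ is non-negative and decreasing on $[0,\alpha)$, we may write, for $t\in[0,\alpha)$,
\[
  f(t) = \int_0^\infty \mathbbm 1_{\{f(t) > s\}} \dif s = \int_0^\infty \mathbbm 1_{[0,\beta(s))}(t)\dif s,
\]
where $\beta(s) = \lambda(\{u\in[0,\alpha) : f(u) > s\})$ is itself a decreasing function of $s$ (this uses that $f$ is decreasing, so the superlevel set $\{f > s\}$ is an interval of the form $[0,\beta(s))$ up to a null set, or all of $[0,\alpha)$). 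Then by Tonelli's theorem applied to the (absolute value of the) signed measure — which is legitimate because $f\in L_1(\kappa)$ and everything in sight is finite — we get
\[
  \int_{[0,\alpha)} f(t)\dif\kappa(t) = \int_0^\infty \Big(\int_{[0,\alpha)} \mathbbm 1_{[0,\beta(s))}(t)\dif\kappa(t)\Big)\dif s = \int_0^\infty \kappa[0,\beta(s))\dif s.
\]
By hypothesis \eqref{eq:lem:continuous_partial_sums_lemma}, each inner integral $\kappa[0,\beta(s)) \geq 0$ (taking $\beta(s)\in[0,\alpha]$, with the endpoint case $\beta(s)=\alpha$ handled by $\kappa[0,\alpha) = \lim_{t\uparrow\alpha}\kappa[0,t)\geq 0$, or by noting $\kappa$ is defined on $[0,\alpha)$ so $\kappa[0,\alpha)$ is just the total mass of the relevant portion), so the outer integral is non-negative, which is exactly the claim.

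The main technical points to be careful about are: (1) justifying the application of Tonelli/Fubini for a \emph{signed} measure $\kappa$ — one splits $\kappa = \kappa^+ - \kappa^-$ into its Jordan decomposition, both parts being finite positive measures, applies Tonelli to each, and recombines, the finiteness of $\int f \dif|\kappa|$ following from $f\in L_1(\kappa)$ together with $f$ bounded (a decreasing $L_1$ function on a finite interval is bounded near $0$... actually $f$ could be unbounded at $0$, so one instead notes $f\in L_1(|\kappa|)$ must be assumed or derived; since $\kappa$ is a finite signed measure and we only know $f\in L_1(\kappa)$, a cleaner route is to first reduce to bounded $f$ by truncation $f\wedge m$ and pass to the limit via dominated convergence); (2) verifying that the superlevel sets of the decreasing function $f$ really are initial segments up to null sets, so that $\kappa[0,\beta(s))$ is meaningful and the hypothesis applies.

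I expect the genuine obstacle to be purely bookkeeping around the signed measure and the possible unboundedness of $f$ at $0$; the conceptual content — "a decreasing function is a positive combination of indicators of initial segments, on each of which $\kappa$ is non-negative" — is straightforward. An alternative, essentially equivalent, presentation avoids the layer-cake formula and instead integrates by parts directly: setting $K(t) = \kappa[0,t)$, one writes $\int_{[0,\alpha)} f\dif\kappa = [f(t)K(t)] - \int K(t)\dif f(t)$ (Lebesgue–Stieltjes), where $\mathrm{d}f \leq 0$ since $f$ is decreasing and $K\geq 0$ by hypothesis, so $-\int K\,\mathrm{d}f \geq 0$, and the boundary term is $\geq 0$ as well; but this requires enough regularity on $\kappa$ and $f$ to make the integration-by-parts formula clean, so I would prefer the layer-cake approach as the primary argument.
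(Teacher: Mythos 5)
Your overall strategy---writing the decreasing $f$ as a positive superposition of indicators of initial segments and invoking the hypothesis $\kappa[0,t)\geq 0$ on each of them---is the same idea as the paper's proof: the paper reduces to simple functions that are constant on consecutive blocks and performs Abel (partial) summation, whereas you work continuously via the layer-cake formula and Fubini with the Jordan decomposition $\kappa=\kappa^+-\kappa^-$. (Your worry about integrability is moot: for a signed measure, $f\in L_1(\kappa)$ means $f\in L_1(|\kappa|)$, so Tonelli applies to each of $\kappa^\pm$ and the two finite integrals can be subtracted; no truncation is needed.)

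There is, however, one step that fails as written. The identity $\mathbbm 1_{\{f(t)>s\}}=\mathbbm 1_{[0,\beta(s))}(t)$ is false whenever the superlevel set is the closed interval $[0,\beta(s)]$, which happens at the jump points of $f$; in fact $\int_0^\infty\mathbbm 1_{[0,\beta(s))}(t)\dif s$ equals the right-continuous regularization $f(t^+)$, not $f(t)$. Saying the two sets agree ``up to a null set'' does not repair this, because ``null'' would have to mean $|\kappa|$-null, and a finite signed measure may charge single points. Concretely, take $\alpha=2$, $f=\mathbbm 1_{[0,1]}$, $\kappa=\delta_0-\delta_1$: the hypothesis \eqref{eq:lem:continuous_partial_sums_lemma} holds, but $\int_{[0,2)}f\dif\kappa=0$ while $\int_0^\infty\kappa[0,\beta(s))\dif s=1$, so your displayed chain of equalities is invalid; and since $f-f(\cdot^+)$ is a nonnegative function that $\kappa$ may weight negatively (e.g.\ $\kappa=\delta_0-\tfrac12\delta_1$), knowing $\int f(t^+)\dif\kappa(t)\geq 0$ does not by itself give $\int f\dif\kappa\geq 0$. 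The repair is immediate and is exactly the observation the paper makes: keep $\kappa(\{f>s\})$ itself, note that $\{f>s\}$ is either $[0,\beta(s))$ or $[0,\beta(s)]$, the first being nonnegative by hypothesis and the second because $\kappa[0,\beta]=\lim_{n}\kappa[0,\beta+1/n)\geq 0$ by continuity from above of the finite signed measure---the same limit argument you already use for the endpoint case $\beta(s)=\alpha$. With that change your argument is complete and is a clean continuous counterpart of the paper's partial-summation proof.
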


\begin{proof}
  First, observe that it is enough to show the assertion of the lemma for simple functions $f$ of the form
  \begin{equation*}
    f = \sum_{j=1}^m f_j \mathbbm 1_{B_j},
  \end{equation*}
  where $f_j \geq 0$ is decreasing and $\{B_j\}$ is a partition of $[0,\alpha)$ of measurable sets
  such that $\sup B_j = \inf B_{j+1}$, $1\leq j \leq m-1$.
  Note that these conditions imply that $B_j$ is a connected subset of $[0,\alpha)$.
  We define $g_j = \kappa(\bigcup_{i=1}^j B_i)$, $1 \leq j \leq m$ and $g_0=0$ and
  observe that~\eqref{eq:lem:continuous_partial_sums_lemma} implies $g_j \geq 0$.
  This is true since $\bigcup_{i=1}^j B_i$ is either $[0,\sup B_j)$ or $[0,\sup B_j]$ and 
  $\kappa[0,t] = \lim_{n} \kappa[0,t+1/n) \geq 0$ for all $0 \leq t < \alpha$.
  Using partial summation we see
  \begin{align*}
    \int_{[0,\alpha)} f(t) \dif \kappa(t)
    & = \sum_{j=1}^m f_j \kappa(B_j)
    = \sum_{j=1}^m f_j (g_j - g_{j-1})\\
    & = f_m g_m - \sum_{j=1}^m (f_j - f_{j-1}) g_{j-1} \geq 0,
  \end{align*}
  since $f\geq 0$ is decreasing and $g_j \geq 0$ as noted before.
\end{proof}

\section{Proof of the main theorem}\label{sec: proof main}

The purpose of this section is to prove Theorem~\ref{thm:main}. We start with some necessary definitions and lemmata.

\subsection{Preparatory definitions and results}

We define the measure space
\begin{equation*}
  (S,\Sigma,\sigma):=(\{1,\dots, n\} \times \Omega,\mathcal P(\{1,\dots,n\})\otimes \mathcal F,\delta\otimes \mu),
\end{equation*}
where $(\Omega,\mathcal F,\mu)$ is the probability space from Theorem~\ref{thm:main}, $\delta$ is the counting measure on $\{1,\dots, n\}$ and $\mathcal P(\{1,\dots,n\})$ denotes the power set of $\{1,\dots,n\}$. Observe that $\sigma(S)=n$ and a measurable subset  $A\subset S$ is an atom in $S$ if and only if $A=\{i\}\times A'$ up to a $\sigma$-null set for some $1\leq i\leq n$ and some atom $A'$ in $\Omega$.

Let $a:S\to \mathbb R$ be a measurable function with respect to Lebesgue measure on $\mathbb{R}$,
which will be fixed throughout the entire section. Note that, without restriction, we assume that
$a$ is non-negative.
We apply Proposition~\ref{pro:construction-b} to the function $a$ and obtain a measurable function
$b:S\to[0,\infty)$ with the following properties:
\begin{enumerate}[(i)]
\item  for all $x\in[0,\infty)$ we either have $\sigma( b=x )=0$ or $\{b=x \}$ is an atom,
\item  for all $s,t\in S$, we have $a(s) > a(t)$ implies that $b(s) > b(t)$.  
\end{enumerate} 
We define the set of all measurable functions on $S$ that are ordered in the same way as $b$ by
\begin{equation} \label{eq:A_b}
  \begin{aligned}
    \mathcal A_b
    := \{d\, :\, S\to &[0,\infty)\text{ measurable\,} \mid \\
    & b(x) \leq b(y)\implies  d(x)\leq d(y) \text{ for all } x,y\in S\}.
  \end{aligned}
\end{equation}
This means that, if $b$ is constant on some set $B$, then any $d\in\mathcal A_b$ is constant on $B$ as well. Note that in general, $d\in\mathcal A_b$ may be constant on some set $B_0$, where $b$ is not.

Moreover, we define the function $h:[0,\infty)\to \Sigma$ by
\[
h(t):=\bigcup_{j=1}^\infty \{b\geq b^*(t-1/j)\}.
\]
Roughly speaking, $h(t)$ is the subset of $S$ having approximately measure $t$, on which $b$ takes
its largest values.
We single out those parameter values $t$ such that $\sigma(h(t))=t$ by setting
\begin{equation*}
  U:=\{t\in [0,n]:\sigma(h(t))=t\}.
\end{equation*}
Since $U$ plays an important role in what follows, we first investigate some of its properties.
\begin{lemma}\label{lem:U-properties}
  The set $U$ has the following properties:
  \begin{enumerate}[(i)]
  \item $n \in U$,\label{item:lem:U-properties-1}
  \item For all $t \in [0,n]$ we have that
    $t \in U^c$ if and only if there exists an open interval $V \ni t$ such that
    $b^*$ is constant on $V$.
    \label{item:lem:U-properties-2}
  \item $U$ is closed.\label{item:lem:U-properties-3}
  \item If $(c,d) \subset U^c$, then $b^*$ is constant on $(c,d)$.
    \label{item:lem:U-properties-4}
  \end{enumerate}
\end{lemma}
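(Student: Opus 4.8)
The plan is to reduce all four items to a single evaluation of $\sigma(h(t))$. Since $b^*$ is decreasing, the sets $\{b\ge b^*(t-1/j)\}$ increase with $j$, so $h(t)$ is an increasing union and $\sigma(h(t))=\lim_j\sigma(\{b\ge b^*(t-1/j)\})$. The workhorse will be the elementary two-sided bound, valid for $0<u\le n$,
\[
  \sigma(\{b>b^*(u)\})\le u\le\sigma(\{b\ge b^*(u)\}),
\]
which I would read straight off $b^*(u)=\inf\{s:F_b(s)\le u\}$: for $s>b^*(u)$ one has $F_b(s)=\sigma(\{b>s\})\le u$, so the increasing union $\{b>b^*(u)\}=\bigcup_j\{b>b^*(u)+1/j\}$ has measure $\le u$, while for $s<b^*(u)$ one has $F_b(s)>u$, so the decreasing intersection $\{b\ge b^*(u)\}=\bigcap_j\{b>b^*(u)-1/j\}$ has measure $\ge u$. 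I would also note that, by the construction in Proposition~\ref{pro:construction-b}, $b>0$ holds $\sigma$-a.e.\ (the offsets prevent $b$ from vanishing on a set of positive measure), so $b^*(u)>0$ for $u<n$ and equimeasurability $\sigma(\{b>s\})=\lambda(\{b^*>s\})$ may be used freely; this positivity is exactly what makes item~\eqref{item:lem:U-properties-2} correct at $t=n$.

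For \eqref{item:lem:U-properties-1}, the left half of the bound with $u=n-1/j$ gives $\sigma(h(n))\ge\sigma(\{b\ge b^*(n-1/j)\})\ge n-1/j$ for all $j$, hence $\sigma(h(n))\ge n$; since $h(n)\subseteq S$ and $\sigma(S)=n$, we get $n\in U$. The same argument with $u=t-1/j$ yields $\sigma(h(t))\ge t$ for every $t\in[0,n]$, which I will use repeatedly. For the implication ``$\Leftarrow$'' in \eqref{item:lem:U-properties-2}: if $b^*\equiv c$ on an open interval $(\gamma,\delta)\ni t$, pick $u_1,u_2\in(\gamma,\delta)$ with $u_1<t<u_2$, so $b^*(u_1)=b^*(u_2)=c$ and $b^*(t-1/j)=c$ for large $j$; then $h(t)\supseteq\{b\ge c\}$, and the right half of the bound gives $\sigma(h(t))\ge\sigma(\{b\ge b^*(u_2)\})\ge u_2>t$, so $t\in U^c$.

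The substance is the implication ``$\Rightarrow$'' in \eqref{item:lem:U-properties-2}. Assume $t\in U^c$; since $\sigma(h(t))\ge t$ always and the endpoints $t=0$ (where $h(0)=\emptyset$) and $t=n$ (by \eqref{item:lem:U-properties-1}) lie in $U$, we have $t\in(0,n)$ and $\sigma(h(t))>t$. Set $L:=\lim_j b^*(t-1/j)=b^*(t^-)$, and note $L\ge b^*(t)>0$. First suppose $b^*(t-1/j)>L$ for every $j$: then $h(t)\subseteq\{b>L\}$ (as $b\ge b^*(t-1/j)>L$ on each summand), and conversely $\{b>L\}\subseteq h(t)$ because $b^*(t-1/j)\downarrow L$, so $h(t)=\{b>L\}$; moreover $b^*(s)>L$ for $s<t$ (choose $j$ with $t-1/j>s$) while $b^*(s)\le b^*(t)\le L$ for $s\ge t$, so $\{b^*>L\}=[0,t)$ and equimeasurability gives $\sigma(h(t))=t$, a contradiction. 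Hence $b^*(t-1/j)=L$ for all large $j$, i.e.\ $b^*\equiv L$ on some $[t-1/j_0,t)$, and then $h(t)=\{b\ge L\}$. Writing $P:=\{s\ge0:b^*(s)=L\}$, which is an interval by monotonicity and is bounded since $L>0$, one has $\{b^*\ge L\}=[0,\inf P)\cup P$, so $\sigma(h(t))=\lambda(\{b^*\ge L\})=\sup P$. As $[t-1/j_0,t)\subseteq P$ we have $\inf P<t\le\sup P$, and $\sigma(h(t))>t$ forces $\sup P>t$; therefore $t\in(\inf P,\sup P)\subseteq P$, and $b^*$ is constant on the open interval $(\inf P,\sup P)\ni t$, as needed.

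Items \eqref{item:lem:U-properties-3} and \eqref{item:lem:U-properties-4} then follow formally from \eqref{item:lem:U-properties-2}: if $t\in U^c$ with witnessing interval $V$, every $t'\in V$ is witnessed by the same $V$, so $V\subseteq U^c$ and $U^c$ is open, which is \eqref{item:lem:U-properties-3}; and if $(c,d)\subseteq U^c$ then $b^*$ is locally constant on $(c,d)$ by \eqref{item:lem:U-properties-2}, hence constant since $(c,d)$ is connected, which is \eqref{item:lem:U-properties-4}. I expect the main obstacle to be the endpoint bookkeeping inside \eqref{item:lem:U-properties-2}: tracking whether $\{b^*\ge L\}$, $\{b^*>L\}$, and the plateau $P$ contain their endpoints (which rests on right-continuity of $b^*$) and cleanly separating the case where $b^*(t-1/j)$ attains the value $L$ from the case where it only approaches it.
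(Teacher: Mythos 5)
Your proposal is correct, and its overall skeleton matches the paper's: item \eqref{item:lem:U-properties-1} and the ``if'' half of \eqref{item:lem:U-properties-2} rest on the same bound $\sigma(h(u))\geq \sigma(\{b\geq b^*(u-1/j)\})\geq u-1/j$, and \eqref{item:lem:U-properties-3}, \eqref{item:lem:U-properties-4} are drawn formally from \eqref{item:lem:U-properties-2} exactly as in the paper (connectedness in place of the paper's compactness argument is immaterial). The one place where you genuinely diverge is the ``only if'' half of \eqref{item:lem:U-properties-2}: the paper argues in two lines that $\sigma(h(t))>t$ yields some $j$ with $\lambda(b^*\geq b^*(t-1/j))>t$, and since this superlevel set of the decreasing function $b^*$ is an initial interval it contains a point $t_1>t$, forcing $b^*(t-1/j)=b^*(t_1)$ and hence constancy on $(t-1/j,t_1)\ni t$; you instead analyze the left limit $L=b^*(t^-)$, rule out the case $b^*(t-1/j)>L$ for all $j$ by computing $\sigma(h(t))=t$ there, and in the remaining case identify $h(t)=\{b\geq L\}$ and evaluate $\sigma(h(t))=\sup P$ for the plateau $P=\{b^*=L\}$. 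Your route is longer but valid, and it buys two things the paper's does not make explicit: an exact description of $h(t)$ and of $\sigma(h(t))$ on $U^c$, and the observation that $b>0$ $\sigma$-a.e.\ (a genuine consequence of the offsets in Proposition~\ref{pro:construction-b}), which is precisely why \eqref{item:lem:U-properties-2} is consistent at $t=n$ with \eqref{item:lem:U-properties-1} — a boundary point the paper passes over silently. The endpoint bookkeeping you flag (whether $\inf P\in P$, conventions for $b^*$ left of $0$) only affects sets of measure zero and does not impair the argument.
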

\begin{proof}
  \eqref{item:lem:U-properties-1} By definition of $h$, and since $b$ and $b^*$ are equimeasurable, we have for all positive integers $j$:
  \[
  \sigma(h(n))\geq \sigma\big(b\geq b^*(n-1/j)\big) =\lambda\big(b^*\geq b^*(n-1/j)\big)\geq n-1/j,
  \]
  i.e., $\sigma(h(n))\geq n$. On the other hand, $\sigma(h(n))\leq \sigma(S)=n$.

  \eqref{item:lem:U-properties-2} Let $t \in U^c$. Thus, there exists an index $j$ satisfying
  \[
  \sigma(h(t))\geq \sigma\big(b\geq b^*(t-1/j)\big)>t.
  \]
  This implies that $\lambda\big(b^*\geq b^*(t-1/j)\big)>t$ and so there exist two points $t_0,t_1$
  with $t_0<t<t_1$ such that $b^*(t_0)=b^*(t_1)$. But since $b^*$ is decreasing, $b^*$ is constant
  on $(t_0,t_1)$.
  On the other hand, let $b^*$ be constant on the interval $(t-2\varepsilon,t+2\varepsilon)$. This
  implies that $h(t)=h(t+\varepsilon)$ and so,
  \[
  \sigma(h(t))=\sigma(h(t+\varepsilon))\geq t+\varepsilon>t,
  \]
  i.e., $t\in U^c$.

 \eqref{item:lem:U-properties-3} This is an immediate consequence
  of~\eqref{item:lem:U-properties-2}. 

  \eqref{item:lem:U-properties-4} Finally, let $c < d$ with $(c,d) \subset U^c$ and $I \subset (c,d)$ be a compact interval.
  For every $t \in I$ we use~\eqref{item:lem:U-properties-2} to choose an open interval $V(t)$
  containing $t$ on which $b^*$ is constant.
  By compactness, $b^*$ is constant on $I$, and since $I$ was arbitrary,~\eqref{item:lem:U-properties-4} is proved.
\end{proof}

We now recall the assumptions of Theorem~\ref{thm:main}.
The collection $G$ is a subset of all mappings from $\{1,\dots ,n\}\to \Omega$ and $\Pro$ is a probability measure on $G$  satisfying
\begin{enumerate}[(i)]
\item $\Pro(g(i)\in A)=\mu(A)$,
\item $\Pro(g(i_1)\in A_1, g(i_2)\in A_2 ) \leq C_G \mu(A_1)\mu(A_2)$
\end{enumerate}
for all $i\in I$, all different indices $i_1,i_2\in I$, and all measurable sets $A,A_1,A_2\subset \Omega$.
Next, for all $t\in [0,n]$, we define the random variable
\[
X_t:G\to \{0,\dots,n\},\qquad g\mapsto |g\cap h(t)|,
\]
where $|\cdot|$ denotes the cardinality and we interpret $g$ as the graph of $g$, i.e.
$g\cap h(t) = \{(i,g(i))\in h(t)\, :\, i \in I\}$.
Observe that the definition of $h$ and property~\eqref{item:lem:U-properties-4} of the above lemma imply that for $t\in [0,n]:$
\[
X_t = X_{u_0},\qquad \text{where } u_0=\inf\{u\in U:u\geq t\}.
\]
We will now study some properties of these random variables that are essential for the proof of Theorem~\ref{thm:main}.

\begin{proposition}\label{prop:EXt}
  The random variables $(X_t)$ have the following properties:
  \begin{align*}
    \E X_t&=\sigma(h(t)),\qquad  t \in [0,n],\\
    \E X_t^2&\leq t(1+C_Gt),\qquad t \in U.
  \end{align*}
\end{proposition}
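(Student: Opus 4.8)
The plan is to write the random variable $X_t$ as a sum of indicator functions over the fibers of the set $h(t)$ and then to apply assumptions~\eqref{eq:condition 1} and~\eqref{eq:condition 2} of Theorem~\ref{thm:main} termwise. Concretely, I would fix $t\in[0,n]$ and, for each $i\in I$, introduce the fiber
\[
  h(t)_i := \{\omega\in\Omega\,:\,(i,\omega)\in h(t)\}.
\]
Since $h(t)$ is a countable union of superlevel sets $\{b\geq c\}$ of the measurable function $b$, it belongs to $\Sigma$, so every $h(t)_i$ is $\mathcal F$-measurable; moreover $\sigma(h(t))=\sum_{i=1}^n\mu(h(t)_i)$ because $\sigma=\delta\otimes\mu$. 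Interpreting $g$ as its graph, $(i,g(i))\in h(t)$ holds if and only if $g(i)\in h(t)_i$, whence
\[
  X_t(g) = |g\cap h(t)| = \sum_{i=1}^n \mathbbm 1_{h(t)_i}(g(i)).
\]

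For the first identity I would take expectations, use linearity, and invoke~\eqref{eq:condition 1}, which gives $\E\,\mathbbm 1_{h(t)_i}(g(i))=\Pro(g(i)\in h(t)_i)=\mu(h(t)_i)$; summing yields $\E X_t=\sum_{i=1}^n\mu(h(t)_i)=\sigma(h(t))$. This step uses only~\eqref{eq:condition 1} and hence holds for every $t\in[0,n]$, as asserted.

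For the second moment, since indicators are idempotent,
\[
  X_t^2 = \sum_{i=1}^n \mathbbm 1_{h(t)_i}(g(i)) + \sum_{\substack{i_1,i_2\in I\\ i_1\neq i_2}} \mathbbm 1_{h(t)_{i_1}}(g(i_1))\,\mathbbm 1_{h(t)_{i_2}}(g(i_2)).
\]
After taking expectations the diagonal sum equals $\sigma(h(t))$ by the previous paragraph, while for each pair $i_1\neq i_2$ assumption~\eqref{eq:condition 2} bounds
\[
  \E\big[\mathbbm 1_{h(t)_{i_1}}(g(i_1))\,\mathbbm 1_{h(t)_{i_2}}(g(i_2))\big]
  = \Pro\big(g(i_1)\in h(t)_{i_1},\,g(i_2)\in h(t)_{i_2}\big)
  \leq C_G\,\mu(h(t)_{i_1})\,\mu(h(t)_{i_2}).
\]
Summing over all $i_1\neq i_2$ and adding back the nonnegative diagonal terms,
\[
  \E X_t^2 \leq \sigma(h(t)) + C_G\sum_{i_1,i_2\in I}\mu(h(t)_{i_1})\,\mu(h(t)_{i_2}) = \sigma(h(t)) + C_G\,\sigma(h(t))^2.
\]
Restricting to $t\in U$, where $\sigma(h(t))=t$ by the definition of $U$, gives $\E X_t^2\leq t+C_G t^2=t(1+C_G t)$.

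I do not expect a genuine obstacle in this proposition: the computation is routine once the fiber decomposition $X_t=\sum_i\mathbbm 1_{h(t)_i}(g(i))$ is set up. The only points deserving a little care are the measurability of the fibers $h(t)_i$ and the bookkeeping step that enlarges $\sum_{i_1\neq i_2}\mu(h(t)_{i_1})\mu(h(t)_{i_2})$ to $\big(\sum_i\mu(h(t)_i)\big)^2=\sigma(h(t))^2$ by throwing in the nonnegative diagonal. The real effort of the section lies downstream, in combining these moment bounds (e.g.\ through the Paley--Zygmund inequality of Theorem~\ref{thm:paley-zygmund}).
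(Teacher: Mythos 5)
Your proposal is correct and follows essentially the same route as the paper: the fibers $h(t)_i$ are exactly the sets $A_i^t$ in the paper's decomposition $h(t)=\bigcup_{i=1}^n\{i\}\times A_i^t$, the first moment is computed termwise from condition~\eqref{eq:condition 1}, and the second moment is bounded by splitting into diagonal and off-diagonal terms, applying condition~\eqref{eq:condition 2}, and enlarging the off-diagonal sum to $\big(\sum_i\mu(h(t)_i)\big)^2=\sigma(h(t))^2=t^2$ for $t\in U$. No substantive difference from the paper's argument.
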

\begin{proof}
  Let $t\in [0,n]$. We have $X_t(g)=\sum_{i=1}^n Y_i^t(g),$ with $Y_i^t(g):=|\{(i,g(i))\}\cap h(t)|\in\{0,1\}$	and, since $h(t)$ is a measurable subset of $S$, we can write
  \[
  h(t)=\bigcup_{i=1}^n \{i\}\times A_i^t
  \]
  with some  measurable sets $A_1^t,\dots, A_n^t\subset\Omega$. Therefore, by assumption \eqref{eq:condition 1} in Theorem~\ref{thm:main},
  \[
  \E X_t=\sum_{i=1}^n \E Y_i^t=\sum_{i=1}^n \Pro(g(i)\in A_i^t)=\sum_{i=1}^n \mu(A_i^t) = \sigma(h(t)).
  \]
  Now we assume $t \in U$ and estimate $\E X_t^2$ using assumption \eqref{eq:condition 2} of Theorem~\ref{thm:main}:
  \begin{align*}
    \E X_t^2&=\sum_{i=1}^n \E Y_i^t+\sum_{i\neq j} \E Y_i^t\cdot Y_j^t\\
    &\leq t+\sum_{i\neq j} \Pro(g(i)\in A_i^t, g(j)\in A_j^t) \\
    &\leq t+C_G\Big(\sum_{i=1}^n \mu(A_i^t)\Big)^2=t(1+C_G t),
  \end{align*}
  where we used that, by definition, $\sigma(h(t))=t$ for any $t\in U$.
\end{proof}

\begin{proposition} \label{prop:Xt geq t/2} For all $t\in [0,n]$,
  \[
  \Pro(X_{t}\geq t/2) \geq \frac{t}{4+4C_Gt}.
  \]
\end{proposition}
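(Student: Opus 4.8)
The plan is to apply the Paley--Zygmund inequality (Theorem~\ref{thm:paley-zygmund}) to the random variable $X_t$, using the first- and second-moment estimates recorded in Proposition~\ref{prop:EXt}. Since the bound on $\E X_t^2$ is only available for $t\in U$, the first step will be to reduce to that case.

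For $t=0$ there is nothing to prove, as the right-hand side vanishes and $\Pro(X_0\geq 0)=1$. So I would fix $t\in(0,n]$ and set $u_0:=\inf\{u\in U: u\geq t\}$. This set is nonempty since $n\in U$ by Lemma~\ref{lem:U-properties}\,\eqref{item:lem:U-properties-1}, and it is closed since $U$ is closed by Lemma~\ref{lem:U-properties}\,\eqref{item:lem:U-properties-3}; hence the infimum is attained, so $u_0\in U$ and $t\leq u_0\leq n$. Recall moreover that $X_t=X_{u_0}$, as observed just before the statement.

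Next I would invoke Proposition~\ref{prop:EXt}: since $u_0\in U$, we have $\E X_{u_0}=\sigma(h(u_0))=u_0>0$ and $\E X_{u_0}^2\leq u_0(1+C_Gu_0)$. Applying Theorem~\ref{thm:paley-zygmund} to the non-negative random variable $Z=X_{u_0}$ with parameter $\theta:=\frac{t}{2u_0}\in(0,\tfrac12]$ then yields
\[
\Pro\Big(X_{u_0}\geq \tfrac{t}{2}\Big)
=\Pro\big(X_{u_0}\geq\theta\,\E X_{u_0}\big)
\geq (1-\theta)^2\,\frac{(\E X_{u_0})^2}{\E X_{u_0}^2}
\geq \frac14\cdot\frac{u_0^2}{u_0(1+C_Gu_0)}
=\frac14\cdot\frac{u_0}{1+C_Gu_0},
\]
where I used $(1-\theta)^2\geq(1-\tfrac12)^2=\tfrac14$. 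Finally, since $X_t=X_{u_0}$, the map $s\mapsto \frac{s}{1+C_Gs}$ is nondecreasing on $[0,\infty)$, and $u_0\geq t$, I would conclude
\[
\Pro\big(X_t\geq t/2\big)\geq \frac14\cdot\frac{t}{1+C_Gt}=\frac{t}{4+4C_Gt},
\]
which is the claimed inequality.

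I do not expect a genuine obstacle: the argument is essentially a one-line application of Paley--Zygmund once the moment bounds are in place. The only point requiring care is the reduction from an arbitrary parameter $t$ to a point of $U$ — so that the second-moment estimate of Proposition~\ref{prop:EXt} applies — and the verification that $u_0$ indeed lies in $U$; both follow immediately from the structure of $U$ established in Lemma~\ref{lem:U-properties} together with the identity $X_t=X_{u_0}$.
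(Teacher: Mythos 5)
Your proof is correct and follows essentially the same route as the paper: reduce to a point $u_0=\inf\{u\in U:u\geq t\}$ of $U$ via the identity $X_t=X_{u_0}$, apply Paley--Zygmund with the moment bounds of Proposition~\ref{prop:EXt}, and finish by monotonicity of $s\mapsto s/(4+4C_Gs)$. The only cosmetic difference is that you take $\theta=t/(2u_0)$ in a single application of Paley--Zygmund, whereas the paper uses $\theta=1/2$ for $t\in U$ and then handles $t\in U^c$ by comparing $\Pro(X_{u_0}\geq t/2)$ with $\Pro(X_{u_0}\geq u_0/2)$; both yield the same bound.
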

\begin{proof}
  First, we assume that $t\in U$. Then, Paley-Zygmund's inequality (Theorem~\ref{thm:paley-zygmund})
  in combination with Proposition \ref{prop:EXt} and the choice $\theta=1/2$ imply the desired
  inequality. If $t\in U^c$, define $u_0:=\inf \{u\in U: u\geq t\}$. Hence, property
  \eqref{item:lem:U-properties-4} of Lemma \ref{lem:U-properties} implies $X_t=X_{u_0}$.
  Therefore,
  \begin{align*}
    \Pro(X_t\geq t/2) = \Pro(X_{u_0}\geq t/2) \geq \Pro(X_{u_0}\geq u_0/2)\geq \frac{u_0}{4+4C_G u_0}.
  \end{align*}
  Since $u_0\geq t$ and the function $s\mapsto s/(4+4C_G s)$ is increasing, the result follows.
\end{proof}

\begin{cor}\label{cor:CG min}
  For $t\in [0,n]$, we have
  \[
  \Pro(X_t\geq 1)\geq\min\Big\{\frac{t}{8},\frac{1}{8C_G}\Big\}.
  \]
\end{cor}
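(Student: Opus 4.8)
The plan is to derive the estimate directly from Proposition~\ref{prop:Xt geq t/2}, the crucial observation being that $X_t$ is non-negative and takes only integer values. If $t=0$ the claimed bound is $0$ and there is nothing to prove, so assume $t\in(0,n]$. On the event $\{X_t\geq t/2\}$ we have $X_t\geq t/2>0$, and since $X_t$ is an integer this forces $X_t\geq 1$; hence $\{X_t\geq t/2\}\subset\{X_t\geq 1\}$. Combining this inclusion with Proposition~\ref{prop:Xt geq t/2} yields
\[
  \Pro(X_t\geq 1)\;\geq\;\Pro(X_t\geq t/2)\;\geq\;\frac{t}{4+4C_Gt}.
\]

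It then suffices to check the elementary inequality $\dfrac{t}{4+4C_Gt}\geq\min\bigl\{\tfrac{t}{8},\tfrac{1}{8C_G}\bigr\}$. I would split into two cases depending on whether $C_Gt\leq 1$ or $C_Gt\geq 1$. In the first case $4+4C_Gt\leq 8$, so $\dfrac{t}{4+4C_Gt}\geq\dfrac{t}{8}$. In the second case $4\leq 4C_Gt$, so $4+4C_Gt\leq 8C_Gt$ and therefore $\dfrac{t}{4+4C_Gt}\geq\dfrac{1}{8C_G}$. Either way the left-hand side dominates $\min\bigl\{\tfrac{t}{8},\tfrac{1}{8C_G}\bigr\}$, which finishes the argument.

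I do not expect any genuine difficulty here: the whole statement is essentially a one-line consequence of Proposition~\ref{prop:Xt geq t/2} together with a two-case estimate, and (somewhat curiously) the hypothesis $C_G\geq 1$ is not even needed. The only step worth pointing out explicitly is the passage from $\Pro(X_t\geq t/2)$ to $\Pro(X_t\geq 1)$, which uses the integrality of $X_t$ and is what makes the bound non-trivial in the range $0<t<2$.
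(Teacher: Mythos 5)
Your argument is correct, and it rests on the same key ingredient as the paper's proof (Proposition~\ref{prop:Xt geq t/2} combined with the integrality of $X_t$), but the execution differs in a small yet genuine way. The paper splits at $t=1/C_G$: for $t\leq 1/C_G$ it uses $C_G\geq 1$ to get $t\leq 1$, so that $\{X_t\geq 1\}=\{X_t\geq t/2\}$ and $4+4C_Gt\leq 8$; for $t>1/C_G$ it does \emph{not} apply the proposition at $t$ itself but instead uses the pointwise monotonicity $X_t\geq X_{1/C_G}$ (coming from $h(t)\supseteq h(1/C_G)$) to write $\Pro(X_t\geq 1)\geq\Pro(X_{1/C_G}\geq 1)$ and then applies the proposition at $t=1/C_G$. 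You instead apply the proposition once, at $t$, using only the inclusion $\{X_t\geq t/2\}\subset\{X_t\geq 1\}$ for $t>0$, and then verify the purely numerical inequality $\frac{t}{4+4C_Gt}\geq\min\bigl\{\frac{t}{8},\frac{1}{8C_G}\bigr\}$ by splitting on $C_Gt\lessgtr 1$. What your route buys is a slightly more self-contained proof: you avoid invoking the monotonicity of $t\mapsto X_t$ and, as you note, the hypothesis $C_G\geq 1$ plays no role; what the paper's route buys is that the reduction to the single value $t=1/C_G$ makes the appearance of the threshold $1/(8C_G)$ transparent. Both proofs are equally short and there is no gap in yours.
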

\begin{proof}
  If $t\leq 1/C_G$, we obtain from Proposition~\ref{prop:Xt geq t/2}, the fact that $X_t$ takes only integer values, and because $C_G\geq 1$
  \begin{equation}\label{eq:X_t geq 1:1}
  \Pro(X_t\geq 1) = \Pro(X_t \geq t/2) \geq \frac{t}{4+4C_Gt} \geq \frac{t}{8}.
  \end{equation}
  If $t>1/C_G$, we get from Proposition~\ref{prop:Xt geq t/2} and since $X_t$ takes only integer values
  \begin{equation}\label{eq:X_t geq 1:2}
  \Pro(X_t\geq 1)\geq \Pro(X_{1/C_G}\geq 1) = \Pro\Big(X_{1/C_G} \geq \frac{1}{2C_G}\Big) \geq \frac{1}{8C_G}.    
  \end{equation}
  Combining \eqref{eq:X_t geq 1:1} and \eqref{eq:X_t geq 1:2} concludes the proof.
\end{proof}

As a matter of fact, we will use this corollary in the form
\[
  \Pro(X_t\geq 1)\geq\min\Big\{\frac{t}{8},\frac{1}{8C_G}\Big\}\Pro(X_\ell \geq 1),\qquad 1\leq \ell \leq n.
\]

\begin{cor}\label{cor: Xt geq k}
  Let $k\in\N$ with $1\leq k\leq n$. Then, for $t\in [2k,n]$,
  \[
  \Pro(X_t\geq k)\geq \frac{1}{2+4C_G}.
  \]
\end{cor}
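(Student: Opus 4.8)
The plan is to reduce everything to Proposition~\ref{prop:Xt geq t/2} and then to absorb the dependence on $t$ into the universal constant by an elementary estimate. First I would observe that the hypothesis $t \in [2k,n]$ forces $t/2 \geq k$, so that the event $\{X_t \geq t/2\}$ is contained in $\{X_t \geq k\}$. Consequently
\[
  \Pro(X_t \geq k) \;\geq\; \Pro(X_t \geq t/2) \;\geq\; \frac{t}{4+4C_G t},
\]
where the last inequality is exactly the content of Proposition~\ref{prop:Xt geq t/2}.

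It then remains to bound $\frac{t}{4+4C_G t}$ from below by $\frac{1}{2+4C_G}$. I would do this by noting that the map $s\mapsto \frac{s}{4+4C_G s}$ is increasing on $[0,\infty)$ (as already used in the proof of Proposition~\ref{prop:Xt geq t/2}), and that $t \geq 2k \geq 2$ because $k \geq 1$. Evaluating at $s=2$ gives $\frac{2}{4+8C_G} = \frac{1}{2+4C_G}$, hence
\[
  \frac{t}{4+4C_G t} \;\geq\; \frac{2}{4+8C_G} \;=\; \frac{1}{2+4C_G}.
\]
Equivalently, one may simply verify the cross-multiplied inequality $t(2+4C_G) \geq 4 + 4C_G t$, which reduces to $2t \geq 4$, i.e. $t\geq 2$. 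Combining the two displays yields the claim.

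There is essentially no obstacle here: the corollary is a direct consequence of Proposition~\ref{prop:Xt geq t/2} together with the factor-of-two slack built into the range $t\geq 2k$, and the only point requiring (trivial) care is checking the elementary inequality that converts the $t$-dependent lower bound into the advertised absolute constant $\frac{1}{2+4C_G}$.
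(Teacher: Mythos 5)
Your proof is correct and follows the paper's approach: the paper simply declares the corollary a direct consequence of Proposition~\ref{prop:Xt geq t/2}, and your argument fills in exactly the intended details (the inclusion $\{X_t\geq t/2\}\subset\{X_t\geq k\}$ from $t\geq 2k$, plus the elementary monotonicity estimate showing $\tfrac{t}{4+4C_G t}\geq\tfrac{1}{2+4C_G}$ for $t\geq 2$).
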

\begin{proof}
  This is a direct consequence of Proposition~\ref{prop:Xt geq t/2}.
\end{proof}

\begin{cor}\label{cor:lower estimate k-max}
  For all $k\in\N$ in the range $1\leq k\leq \ell/2$, 
  \[
  \E \kmax_{1\leq i \leq n} \mathbbm 1_{h(\ell)} \geq \frac{1}{2+4C_G}.
  \]
\end{cor}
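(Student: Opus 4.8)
The plan is to recognize that this corollary is essentially a reformulation of Corollary~\ref{cor: Xt geq k}, obtained by specializing the main-theorem quantity to the $0/1$-valued function $a=\mathbbm 1_{h(\ell)}$ on $S=\{1,\dots,n\}\times\Omega$.

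First I would fix a map $g\in G$ and compute $\kmax_{1\leq i\leq n}\mathbbm 1_{h(\ell)}(i,g(i))$ directly. The sample $\bigl(\mathbbm 1_{h(\ell)}(i,g(i))\bigr)_{i=1}^{n}$ consists of zeros and ones, so its $k$-th largest value equals $1$ if and only if at least $k$ of its entries are equal to $1$, i.e.\ if and only if $|g\cap h(\ell)|\geq k$; otherwise it equals $0$. By the definition of $X_\ell$ this says precisely that
\[
\kmax_{1\leq i\leq n}\mathbbm 1_{h(\ell)}(i,g(i))=\mathbbm 1_{\{X_\ell\geq k\}}(g),\qquad g\in G .
\]
Integrating this identity over $(G,\Pro)$ gives $\E\,\kmax_{1\leq i\leq n}\mathbbm 1_{h(\ell)}=\Pro(X_\ell\geq k)$.

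Next I would invoke Corollary~\ref{cor: Xt geq k} with $t=\ell$. For this I need $\ell\in[2k,n]$: the lower bound $\ell\geq 2k$ is exactly the hypothesis $k\leq \ell/2$, and the upper bound $\ell\leq n$ is the standing assumption of Theorem~\ref{thm:main}. Corollary~\ref{cor: Xt geq k} then yields $\Pro(X_\ell\geq k)\geq \frac{1}{2+4C_G}$, which combined with the displayed identity is the assertion.

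There is no genuine obstacle here; the corollary is an immediate consequence of the work already done. The only two points deserving a line of care are the elementary identification of the $k$-max of a $0/1$ sample with the indicator of the event $\{X_\ell\geq k\}$, and the verification that $\ell$ lies in the admissible range $[2k,n]$ required by Corollary~\ref{cor: Xt geq k}.
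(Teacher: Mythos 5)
Your proposal is correct and takes essentially the same approach as the paper: both arguments reduce the statement to the observation that the $k$-max of the $0/1$ sample $\bigl(\mathbbm 1_{h(\ell)}(i,g(i))\bigr)_i$ detects the event that $g$ meets $h(\cdot)$ in at least $k$ points, and then invoke the Paley--Zygmund bound of Proposition~\ref{prop:Xt geq t/2}. The only cosmetic difference is that the paper restricts to the event $\{X_{2k}\geq k\}$ and applies Proposition~\ref{prop:Xt geq t/2} at $t=2k$, while you use the exact identity $\E\,\kmax_{1\leq i\leq n}\mathbbm 1_{h(\ell)}=\Pro(X_\ell\geq k)$ and cite Corollary~\ref{cor: Xt geq k} at $t=\ell$, which is itself a one-line consequence of that proposition.
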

\begin{proof}
  Let $k$ be an integer in the range $1\leq k\leq \ell/2$. Using Proposition~\ref{prop:Xt geq t/2} with $t=2k$, we obtain 
  \begin{align*}
    \E \kmax_{1\leq i \leq n} \mathbbm 1_{h(\ell)}
    &\geq \E\big(\kmax_{1\leq i \leq n} \mathbbm 1_{h(\ell)};X_{2k}\geq k\big)\\
    & = \Pro(X_{2k}\geq k)
    \geq \frac{2k}{4+8C_Gk}
    \geq \frac{1}{2+4C_G}. \qedhere
  \end{align*}
\end{proof}

\subsection{Reduction to Boolean functions}

In this section, we estimate the expression
$\int_{G}  \sum_{k=1}^{\ell}\kmax\limits_{1\leq i \leq n} |a(i,g(i))| \dif \Pro(g)$ occurring in
Theorem~\ref{thm:main} for general matrices $a$ by the same expression with $a$ replaced
by some averaged matrix $\widetilde a$. In order to begin our investigation, we first have to give a
few definitions. For a measurable function $f \in \mathcal A_b$ we set
\begin{equation} \label{eq:cut-off}
  \widetilde f:= \frac{1}{\ell}\int_0^\ell f^*(s)\dif s \cdot \mathbbm 1_{h(\ell)} \qquad
  \text{and}\qquad f_t:= \mathbbm 1_{h(t)},\, t\in [0,n].
\end{equation}
Observe that both $\widetilde f \in \mathcal A_b$ and $f_t\in \mathcal A_b$.
Moreover, we write
\begin{equation*} 
  S_k(f)(g) := \kmax_{1\leq i \leq n} f(i,g(i)) \qquad\text{and}\qquad S(f)(g) := \sum_{k=1}^{\ell}S_k(f)(g)
\end{equation*}
for all $f\in \mathcal A_b$ and $g \in G$.
Then, for any $k\in\N$ with $1\leq k \leq \ell$,
\begin{equation}\label{eq:exp_S_k}
  \E S_k(a_t) = \Pro(S_k(a_t)=1) = \Pro (X_t\geq k),\qquad t\in [0,n],
\end{equation}
and, using also the equation in Proposition \ref{prop:EXt},
\begin{equation}\label{eq:exp_S_k_a_tilde}
  \E S_k(\widetilde{a_t}) 
  = \frac{\min\{\E X_t,\ell\}}{\ell} \Pro(X_\ell \geq k),\qquad t\in[0,n],
\end{equation}
where $\widetilde{a_t} = (a_t)\widetilde\ $, i.e. we first apply the operation $\cdot_t$ and then
the operation $\widetilde \cdot$.

We first establish our result for the special functions $a_t$ in Proposition \ref{prop:a_t_tilde},
which will then allow us to prove (cf. Subsection \ref{subsec:conclusion}) the same result for general functions $a$ in Theorem~\ref{thm:average_a}.

\begin{proposition}\label{prop:a_t_tilde-lower}
  For all $t \in [0,n]$ we have
  \begin{equation*}
    \E S(a_t) \leq (6+12C_G)\cdot \E S(\widetilde{a_t}).
  \end{equation*}
\end{proposition}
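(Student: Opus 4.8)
The plan is to reduce the asserted inequality, via the two identities for $\E S_k(a_t)$ and $\E S_k(\widetilde{a_t})$ recorded just above the proposition, to a single quantitative estimate for $X_\ell$, namely $\E\min\{X_\ell,\ell\}\ge \ell/(6+12C_G)$.

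First I would sum \eqref{eq:exp_S_k} and \eqref{eq:exp_S_k_a_tilde} over $k=1,\dots,\ell$. Since $X_t$ and $X_\ell$ take values in $\{0,1,\dots,n\}$, one has $\sum_{k=1}^{\ell}\mathbbm 1_{\{Y\ge k\}}=\min\{Y,\ell\}$ for any such random variable $Y$, hence
\[
  \E S(a_t)=\sum_{k=1}^{\ell}\Pro(X_t\ge k)=\E\min\{X_t,\ell\}
  \qquad\text{and}\qquad
  \E S(\widetilde{a_t})=\frac{\min\{\E X_t,\ell\}}{\ell}\,\E\min\{X_\ell,\ell\}.
\]
From the trivial bounds $\min\{X_t,\ell\}\le X_t$ and $\min\{X_t,\ell\}\le \ell$ (equivalently, Jensen applied to the concave map $x\mapsto\min\{x,\ell\}$) it follows that $\E S(a_t)=\E\min\{X_t,\ell\}\le\min\{\E X_t,\ell\}$. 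Therefore it suffices to prove $\min\{\E X_t,\ell\}\le(6+12C_G)\,\E S(\widetilde{a_t})$, and substituting the identity for $\E S(\widetilde{a_t})$ this reduces to
\[
  \E\min\{X_\ell,\ell\}\ge \frac{\ell}{6+12C_G}.
\]
Combining the two displays then gives $\E S(a_t)\le(6+12C_G)\,\E S(\widetilde{a_t})$ for every $t\in[0,n]$, with no case distinction in $t$.

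It remains to establish this lower bound for $\E\min\{X_\ell,\ell\}$. Writing $\E\min\{X_\ell,\ell\}=\sum_{k=1}^{\ell}\Pro(X_\ell\ge k)$, I would distinguish the cases $\ell\ge 2$ and $\ell=1$. If $\ell\ge 2$, then for every integer $k$ with $1\le k\le\lfloor\ell/2\rfloor$ we have $2k\le\ell\le n$, so Corollary~\ref{cor: Xt geq k} applied with $t=\ell$ gives $\Pro(X_\ell\ge k)\ge 1/(2+4C_G)$; keeping only these terms and using the elementary inequality $\lfloor\ell/2\rfloor\ge \ell/3$ valid for integers $\ell\ge 2$ yields $\E\min\{X_\ell,\ell\}\ge \lfloor\ell/2\rfloor/(2+4C_G)\ge \ell/(6+12C_G)$. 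If $\ell=1$, then $\E\min\{X_1,1\}=\Pro(X_1\ge 1)\ge\min\{1/8,1/(8C_G)\}=1/(8C_G)$ by Corollary~\ref{cor:CG min} and $C_G\ge 1$, and $1/(8C_G)\ge 1/(6+12C_G)$ since $6+4C_G\ge 0$.

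The only genuinely delicate point is the endpoint $\ell=1$: there Corollary~\ref{cor: Xt geq k} (equivalently Corollary~\ref{cor:lower estimate k-max}) is vacuous, since no integer $k$ satisfies $1\le k\le \ell/2$, and one must instead invoke Corollary~\ref{cor:CG min}. Everything else is routine bookkeeping: rewriting partial sums of tail probabilities as truncated expectations, and tracking the constant $2+4C_G$ through the inequality $\lfloor\ell/2\rfloor\ge \ell/3$.
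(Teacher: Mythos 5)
Your proof is correct and is essentially the paper's argument: both rest on summing \eqref{eq:exp_S_k} and \eqref{eq:exp_S_k_a_tilde} over $k$, bounding $\E S(a_t)$ by $\min\{\E X_t,\ell\}$, and then lower-bounding the tail sums of $X_\ell$ via Corollary~\ref{cor: Xt geq k} when $\ell\geq 2$ and Corollary~\ref{cor:CG min} when $\ell=1$. Your repackaging of this as the single estimate $\E\min\{X_\ell,\ell\}\geq \ell/(6+12C_G)$ is only a cosmetic reorganization of the same computation, with the same case split in $\ell$ and the same constants.
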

\begin{proof}
  First, assume $\ell=1$.
  Then, equation \eqref{eq:exp_S_k_a_tilde} and Corollary~\ref{cor:CG min} yield
  \begin{align*}
    \E S(\widetilde{a_t})
    &\geq \min\{\E X_t,1\}\Pro(X_1\geq 1) \\
    &\geq \frac{1}{8C_G}\min\{\E X_t,1\}
    \geq \frac{1}{8C_G} \Pro(X_t \geq 1)
    =\frac{1}{8C_G} \E S(a_t),
  \end{align*}
  where we used \eqref{eq:exp_S_k} in the latter equality.
  Second, assume that $\ell \geq 2$.
  Due to equation~\eqref{eq:exp_S_k_a_tilde} we have
  \begin{align*}
    \E S(\widetilde{a_t})
    & = \frac{\min\{\E X_t,\ell\}}{\ell} \sum_{k=1}^\ell \Pro(X_\ell \geq k)
     \geq \frac{\min\{\E X_t,\ell\}}{\ell} \sum_{k=1}^{\ell/2} \Pro(X_\ell \geq k).
  \end{align*}
  Then, Corollary~\ref{cor: Xt geq k} and~\eqref{eq:exp_S_k} give us
  \begin{equation*}
    (6+12C_G)\cdot\E S(\widetilde{a_t})
    \geq  \min\{\E X_t, \ell\}
    \geq  \sum_{k=1}^\ell \Pro(X_t \geq k)
    = \E S (a_t),
  \end{equation*}
  where we used that
  \[
  \E X_t = \sum_{k=1}^n \Pro(X_t\geq k).\qedhere
  \]
\end{proof}

\begin{proposition}\label{prop:a_t_tilde}
  For all $t\in U$, we have
  \[
  \E S(\widetilde{a_t})\leq (8+16C_G)\cdot\E S(a_t).
  \]
\end{proposition}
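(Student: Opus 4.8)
The plan is to split the desired inequality into two independent estimates: an upper bound $\E S(\widetilde{a_t}) \le \min\{t,\ell\}$, which is where the hypothesis $t \in U$ enters, and a lower bound $\E S(a_t) \ge \min\{t,\ell\}/(8+16C_G)$, which holds for \emph{every} $t \in [0,n]$. Since $8+16C_G \ge 1$, combining the two gives the claim at once. For the upper bound I would note that $t \in U$ forces $\E X_t = \sigma(h(t)) = t$ (by Proposition~\ref{prop:EXt} and the definition of $U$), so that formula~\eqref{eq:exp_S_k_a_tilde} reads $\E S_k(\widetilde{a_t}) = \frac{\min\{t,\ell\}}{\ell}\Pro(X_\ell \ge k)$; summing over $1 \le k \le \ell$ and bounding each probability by $1$ yields $\E S(\widetilde{a_t}) = \frac{\min\{t,\ell\}}{\ell}\sum_{k=1}^\ell \Pro(X_\ell \ge k) \le \min\{t,\ell\}$. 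It then remains, via~\eqref{eq:exp_S_k}, to lower bound $\E S(a_t) = \sum_{k=1}^\ell \Pro(X_t \ge k)$.

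For this lower bound I would distinguish three ranges of $t$. If $t \le 1/C_G$ (so $t \le 1 \le \ell$ and $\min\{t,\ell\} = t$), keeping only the $k=1$ term and applying Corollary~\ref{cor:CG min} gives $\E S(a_t) \ge \Pro(X_t \ge 1) \ge t/8 \ge t/(8+16C_G)$. If $1/C_G < t \le 2$, the same corollary gives $\E S(a_t) \ge \Pro(X_t \ge 1) \ge 1/(8C_G)$, and this dominates $2/(8+16C_G) \ge \min\{t,\ell\}/(8+16C_G)$ because $16C_G \le 8 + 16C_G$. Finally, for $t > 2$ set $m := \min\{\ell,\lfloor t/2\rfloor\} \ge 1$; Corollary~\ref{cor: Xt geq k} (applicable since $k \le \lfloor t/2\rfloor \le t/2$ and $t \le n$) gives $\Pro(X_t \ge k) \ge \frac{1}{2+4C_G}$ for every $1 \le k \le m$, so $\E S(a_t) \ge \frac{m}{2+4C_G}$, and the elementary inequality $\min\{t,\ell\} \le 4m$ finishes the case since $4(2+4C_G) = 8+16C_G$. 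That inequality is clear when $\ell \le \lfloor t/2\rfloor$ (then $m = \ell \ge \min\{t,\ell\}$) and otherwise follows from $m = \lfloor t/2\rfloor \ge t/4$.

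The step I expect to be the main obstacle is this case analysis rather than any single computation: Corollary~\ref{cor: Xt geq k}, the tool that yields many comparable lower-order terms, only becomes useful once $t$ exceeds roughly $2$, so the small-$t$ regime has to be treated with the weaker single-term estimate of Corollary~\ref{cor:CG min}, and one must verify that the constants glue together at the seams (that $8 \le 8+16C_G$, that $\frac{8+16C_G}{8C_G} = 2 + \frac{1}{C_G} \ge 2$, and that $4(2+4C_G) = 8+16C_G$). A minor point requiring care is the floor estimate $\lfloor t/2\rfloor \ge t/4$ for $t \ge 2$, which I would check separately on $2 \le t < 4$, where $\lfloor t/2\rfloor = 1 \ge t/4$, and on $t \ge 4$, where $\lfloor t/2\rfloor \ge t/2 - 1 \ge t/4$.
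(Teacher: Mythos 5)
Your proof is correct and takes essentially the same route as the paper: both reduce the claim via \eqref{eq:exp_S_k} and \eqref{eq:exp_S_k_a_tilde} (using that $t\in U$ gives $\E X_t=\sigma(h(t))=t$) and then perform the same case analysis around $t\approx 2$, invoking Corollary~\ref{cor:CG min} for small $t$ and Corollary~\ref{cor: Xt geq k} for larger $t$, with the same constants. Your decoupling into the two estimates $\E S(\widetilde{a_t})\le \min\{t,\ell\}$ and $\E S(a_t)\ge \min\{t,\ell\}/(8+16C_G)$ is just a cosmetic repackaging of the paper's direct comparison of the two sums.
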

\begin{proof}
  Combining \eqref{eq:exp_S_k} and \eqref{eq:exp_S_k_a_tilde}, we see that it is enough to prove the inequality
  \begin{equation}\label{eq:exp_ineq_to_show}
    \frac{t}{\ell}\sum_{k=1}^\ell \Pro(X_\ell \geq k) \leq (8+16C_G)\cdot \sum_{k=1}^\ell \Pro(X_t\geq k)
  \end{equation}
  for $t\in U\cap [0,\ell]$.

  First, we assume that $t\leq 2$. Then, Corollary~\ref{cor:CG min} implies
  \begin{align*}
    \frac{t}{\ell}\sum_{k=1}^\ell \Pro(X_\ell\geq k)&\leq t\Pro(X_\ell\geq 1) \leq  t\max\Big\{\frac{8}{t},8C_G\Big\} \Pro(X_t\geq 1).
  \end{align*}
  Since $t\leq 2$, we further get
  \begin{equation}\label{eq:case1_exp}
    \frac{t}{\ell}\sum_{k=1}^\ell \Pro(X_\ell\geq k)
    \leq 16C_G \Pro(X_t\geq 1) \leq  16C_G\sum_{k=1}^\ell \Pro(X_t\geq k),
  \end{equation}
  which implies~\eqref{eq:exp_ineq_to_show} for $t\leq 2$.

  Second, we assume $2m\leq t\leq 2(m+1)$ for some $1\leq m\leq \ell/2$. In that case, Corollary~\ref{cor: Xt geq k} yields
  \begin{align*}
    \frac{t}{\ell}\sum_{k=1}^\ell \Pro(X_\ell\geq k) 
    &\leq \frac{t}{m}\sum_{k=1}^m \Pro(X_\ell\geq k) \leq \frac{t}{m}\sum_{k=1}^m (2+4C_G)\Pro(X_t\geq k).
  \end{align*}
  Using the inequality $t\leq 2(m+1)$, we conclude
  \begin{align*}
    \frac{t}{\ell}\sum_{k=1}^\ell \Pro(X_\ell\geq k) 
    &\leq (8+16C_G)\cdot\sum_{k=1}^m \Pro(X_t\geq k) \leq (8+16C_G)\cdot \sum_{k=1}^\ell \Pro(X_t\geq k),
  \end{align*}
  which is~\eqref{eq:exp_ineq_to_show} for $t\geq 2$. Combining the latter with~\eqref{eq:case1_exp}, the proof of the proposition is completed.
\end{proof}

\begin{thm}\label{thm:average_a}
  Let $a:S\to [0,\infty)$ be an arbitrary measurable function on $S$. Then,
  \begin{equation}\label{eq:Sa and Satilde}
  \frac{1}{6 + 12C_G}\cdot\E S(a) \leq \E S(\widetilde a)\leq (8+16C_G)\cdot\E S(a).
  \end{equation}
\end{thm}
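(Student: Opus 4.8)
The plan is to deduce the assertion from the already-established special case (Propositions~\ref{prop:a_t_tilde-lower} and~\ref{prop:a_t_tilde}) by integrating the estimates for the Boolean building blocks $a_t$ against a suitable measure in the parameter $t$. The key observation should be a "layer-cake" type representation: for $f\in\mathcal A_b$ one can write $f$ (up to rearrangement) as an integral of the indicator functions $f_t = \mathbbm 1_{h(t)}$ over $t\in[0,n]$, since on each level set $h$ is built from $b$ and $b$ is essentially strictly ordered by Proposition~\ref{pro:construction-b}. Concretely, I would aim to show that both $S(a)$ and $S(\widetilde a)$ are, pointwise in $g$ or at least in expectation, integrals of the form $\int_0^n \E S(a_t)\dif\nu(t)$ and $\int_0^n \E S(\widetilde{a_t})\dif\nu(t)$ for one and the same (non-negative) measure $\nu$ determined by $a$ and the map $h$. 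Granting such a representation, the two-sided bound follows immediately: the upper bound by integrating $\E S(\widetilde{a_t})\leq (8+16C_G)\E S(a_t)$ (Proposition~\ref{prop:a_t_tilde}, valid for $t\in U$, which is enough since $X_t=X_{u_0}$ and the same reduction was used before), and the lower bound by integrating $\E S(a_t)\leq(6+12C_G)\E S(\widetilde{a_t})$ (Proposition~\ref{prop:a_t_tilde-lower}).

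The main technical work — and the step I expect to be the real obstacle — is establishing the integral representation cleanly. The operation $f\mapsto\widetilde f = \frac1\ell\int_0^\ell f^*(s)\dif s\cdot\mathbbm 1_{h(\ell)}$ is \emph{not} linear in $f$, so one cannot simply say $\widetilde a = \int \widetilde{a_t}\dif\nu(t)$ termwise; instead one must check that the $k$-max functional $S_k$ interacts correctly with the level-set decomposition. For the $a_t$ side this is transparent because $S_k(a_t)(g)=\mathbbm 1_{\{X_t(g)\geq k\}}$, and $k$-max of a non-negative function is an integral of $k$-max of its level sets: $S_k(a)(g)=\int_0^\infty \mathbbm 1_{\{X_t(g)\geq k\}}\,(\ldots)$, where the integrating measure is the push-forward under $a$ (equivalently, described through $b$ and $h$) of Lebesgue measure. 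The delicate part is matching this with the $\widetilde{a_t}$ side: here I would use that $\widetilde{a_t}$ equals a constant times $\mathbbm 1_{h(\ell)}$, the constant being $\frac{\min\{\E X_t,\ell\}}{\ell}$ essentially through~\eqref{eq:exp_S_k_a_tilde}, and that $\widetilde a = \bigl(\frac1\ell\int_0^\ell a^*(s)\dif s\bigr)\mathbbm 1_{h(\ell)}$, so I must verify $\frac1\ell\int_0^\ell a^*(s)\dif s = \int_0^n \frac{\min\{\sigma(h(t)),\ell\}}{\ell}\dif\nu(t)$ for the \emph{same} $\nu$ — this is a Fubini/change-of-variables computation using $\sigma(h(t))=\E X_t$ and the definition of $a^*$.

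Carrying this out, the order of steps would be: (1) fix the measure $\nu$ on $[0,n]$ as the image of the relevant part of Lebesgue measure under the decreasing rearrangement, and record $\int_0^n \mathbbm 1_{\{X_t(g)\geq k\}}\dif\nu(t) = S_k(a)(g)$ for a.e.\ $g$, summing over $1\leq k\leq\ell$ to get $\E S(a)=\int_0^n \E S(a_t)\dif\nu(t)$; (2) verify the analogous identity $\E S(\widetilde a)=\int_0^n \E S(\widetilde{a_t})\dif\nu(t)$ via the constant-times-indicator structure of both sides and the Fubini computation above; (3) restrict the $t$-integral to $U$ (legitimate since $U^c$ is a countable union of intervals on which $b^*$, hence $h$ and $X_t$, are locally constant, and $\nu$ of the "bad" endpoints is handled as in Lemma~\ref{lem:U-properties}); (4) plug in Propositions~\ref{prop:a_t_tilde-lower} and~\ref{prop:a_t_tilde} under the integral sign and conclude~\eqref{eq:Sa and Satilde}. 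A subtlety to watch in step~(3) is that Proposition~\ref{prop:a_t_tilde} is stated only for $t\in U$; for $t\in U^c$ one invokes $X_t=X_{u_0}$ with $u_0=\inf\{u\in U:u\geq t\}$ exactly as in the proof of Proposition~\ref{prop:Xt geq t/2}, so the estimate transfers with the same constants, and similarly for Proposition~\ref{prop:a_t_tilde-lower}, which already holds for all $t\in[0,n]$.
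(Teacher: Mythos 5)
Your plan is correct, but it is organized differently from the paper's proof, essentially as its Fubini dual. The paper does not decompose $a$ into layers over the value axis; instead it proves the representation $\E S_k(d)=\int_{u_k(G)} d^*(z)\dif\Pro_{u_k}(z)$ for every $d\in\mathcal A_b$ (via the map $f=b^{-1}\circ b^*$ and the hitting times $u_k(g)=\inf\{t:X_t(g)\geq k\}$), sets $\nu=\sum_{k=1}^\ell\Pro_{u_k}$, identifies $\E S(a_t)=\nu[0,t)$, and then converts the Boolean estimates of Propositions~\ref{prop:a_t_tilde-lower} and~\ref{prop:a_t_tilde} into the statement that a signed measure $\tau=C\nu-\frac{\nu[0,\sigma(h(\ell)))}{\ell}\eta$ has nonnegative partial masses $\tau[0,t)\geq 0$; the proof is finished by Lemma~\ref{lem:continuous_partial_sums_lemma} applied to the decreasing function $a^*$, with the passage from $t\in U$ to all $t$ handled exactly by the constancy of $b^*$ on components of $U^c$, as you anticipate. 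Your route instead pushes Lebesgue measure in the value variable forward under $\lambda\mapsto t_\lambda=\sigma(a>\lambda)$ and integrates the Boolean estimates directly; the two are linked by $\int a^*\dif\nu=\int_0^\infty\nu[0,t_\lambda)\dif\lambda$. What your version buys is that it avoids Lemma~\ref{lem:continuous_partial_sums_lemma} and the somewhat delicate rearrangement identity~\eqref{eq:need-to-show}, replacing them with the elementary identity $S_k(a)(g)=\int_0^\infty\mathbbm 1\{|g\cap\{a>\lambda\}|\geq k\}\dif\lambda$; what the paper's version buys is a single clean formula valid for all $d\in\mathcal A_b$ and a symmetric treatment of both bounds. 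One point to pin down more precisely than you do: your step (2) identity requires $\sigma(h(t))=t$ for $\nu$-almost every $t$, equivalently that $\{a>\lambda\}$ coincides, up to $\sigma$-null sets, with $h(t_\lambda)$ and that $t_\lambda\in U$. This is where $a\in\mathcal A_b$ and Proposition~\ref{pro:construction-b} genuinely enter: since $a$ is constant on level sets of $b$, one gets $\{b>s_0\}\subset\{a>\lambda\}\subset\{b\geq s_0\}$ with $s_0=\inf\{b(y):a(y)>\lambda\}$, from which $t_\lambda$ never lies in the interior of a constancy interval of $b^*$, so $t_\lambda\in U$ and $\sigma(h(t_\lambda))=t_\lambda$; null-set discrepancies are harmless for $X_{t_\lambda}$ by condition~\eqref{eq:condition 1}. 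Note that this fact is genuinely needed for the lower bound in~\eqref{eq:Sa and Satilde} (for the upper bound the automatic inequality $\sigma(h(t_\lambda))\geq t_\lambda$ already goes the right way), so it cannot be relegated entirely to the ``restrict to $U$'' step (3); with it, your steps (1)--(4) go through with the stated constants.
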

\begin{proof}
  Defining the $\mathcal F^n$-measurable
  function $u_k:G\to [0,n]$ by
  \[
  u_k(g):=\inf \{t: X_t(g)\geq k\}= \inf \{t: |h(t)\cap g|\geq k\},
  \]
  we first show that $d(f(u_k(g)))$ is well defined for all $d\in \mathcal A_b$, where  $f(t):=b^{-1}(b^*(t))$, and that this expression actually satisfies
  \begin{equation*} 
    S_k(d)(g)=d(f(u_k(g))),\qquad d\in\mathcal A_b.
  \end{equation*}
  For the definition of $\mathcal A_b$ see~\eqref{eq:A_b}.
  Observe that $u_k(g)$ is the unique number such that
  \[
  |h(u_k(g) - \varepsilon)\cap g| < k
  \qquad \text{and}\qquad
  |h(u_k(g) + \varepsilon)\cap g|\geq k
  \]
  for all $\varepsilon>0$ and, additionally,
  \begin{equation*}
    f(u_k(g)) = \bigcap_{\varepsilon>0} h(u_k(g)+\varepsilon)\setminus h(u_k(g)-\varepsilon).
  \end{equation*}
  Hence, there exists an element $(i_0,g(i_0)) \in f(u_k(g))$ satisfying
  \begin{equation} \label{eq:kmax-1}
    b(i_0,g(i_0)) = S_k(b)(g).
  \end{equation}
  Furthermore, observe that the definition of $f$ implies
  \begin{equation} \label{eq:kmax-2}
    b(y) = b^*(u_k(g))
    \qquad\text{for all $y \in f(u_k(g))$}.
  \end{equation}
  A consequence of the definition of $\mathcal A_b$ is that \eqref{eq:kmax-1} and~\eqref{eq:kmax-2}
  imply
  \begin{equation*} 
    d(i_0,g(i_0)) = S_k(d)(g) = d(y) \qquad\text{for all $y \in f(u_k(g))$}.
  \end{equation*}
  
  Therefore, \eqref{eq:kmax-2} together with the definition of $\mathcal A_b$ gives us that $d\circ f\,
  :\, u_k(G) \to [0,\infty)$ is a well defined and decreasing function, thus measurable.
  Observe that by changing variables we obtain
  \[
  \E S_k(d)
  = \int_G d(f(u_k(g))) \dif \Pro(g)
  = \int_{u_k(G)} d(f(z)) \dif \Pro_{u_k}(z).
  \]
  We will now show that
  \begin{equation} \label{eq:need-to-show}
    \int_{u_k(G)} d(f(z)) \dif \Pro_{u_k}(z) = \int_{u_k(G)} d^*(z) \dif \Pro_{u_k}(z).
  \end{equation}
  Without loss of generality, we may assume that $d$ is a simple function of the form
  \[
  d=\sum_{j=1}^m d_j \mathbbm 1_{D_j},
  \]
  with $(d_j)$ decreasing and $\{D_j\}$ a disjoint collection of measurable sets.
  Observe that in this case
  \begin{equation} \label{eq:decreasing_rearrangement}
    d^*=\sum_{j=1}^m d_j \mathbbm 1_{[\sum_{i=1}^{j-1} \sigma(D_i),\sum_{i=1}^{j} \sigma(D_i))}
  \end{equation}
  and~\eqref{eq:need-to-show} becomes
  \[
  \sum_{j=1}^m d_j \int_{u_k(G)} \mathbbm 1_{D_j}(f(z))\dif \Pro_{u_k}(z)
  = \sum_{j=1}^m d_j \Pro_{u_k}\bigg( \Big[\sum_{i=1}^{j-1}\sigma(D_i),\sum_{i=1}^{j}\sigma(D_i)\Big)\bigg).
  \]
  Thus, it is sufficient to prove
  \begin{equation} \label{eq:need-to-show-2}
    u_k(G)\cap\{z:\{b=b^*(z)\}\subset D_j\} = u_k(G)\cap\Big[\sum_{i=1}^{j-1}\sigma(D_i),\sum_{i=1}^{j}\sigma(D_i)\Big).
  \end{equation}
  On the one hand, let
  $z \in u_k(G)\cap\Big[\sum_{i=1}^{j-1}\sigma(D_i),\sum_{i=1}^{j}\sigma(D_i)\Big)$, hence,
  $d^*(z) = d_j$.
  Observe that since $d \in \mathcal A_b$ we have
  \begin{equation*} 
    \{y\, :\, b(y) = b^*(z)\} \subset \{y\, :\, d(y) = d^*(z)\} = D_j.
  \end{equation*}
  On the other hand, let $z \in u_k(G)$ be such that $\{y\, :\, b(y)=b^*(z)\} \subset D_j$.
  Note that there exists a unique index $j_0$ such that
  \begin{equation*}
    \{y\, :\, b(y) = b^*(z)\} \subset \{y\, :\, d(y) = d^*(z)\} = D_{j_0}.
  \end{equation*}
  Observe that~\eqref{eq:kmax-1} implies that $u_k(G) \subset \{z\, :\, f(z)\neq \emptyset\}$, hence
  $\{y\, :\, b(y)=b^*(z)\} \neq \emptyset$.
  Thus, the disjointness of the $\{D_j\}$ implies $j=j_0$.
  Therefore, we obtain from~\eqref{eq:decreasing_rearrangement} that
  $z \in \Big[\sum_{i=1}^{j-1}\sigma(D_i),\sum_{i=1}^{j}\sigma(D_i)\Big)$.
  This proves~\eqref{eq:need-to-show-2} and consequently~\eqref{eq:need-to-show}.
  So far we proved that
  \[
  \E S_k(d) = \int_{u_k(G)} d^*(z) \dif \Pro_{u_k}(z),
  \qquad d\in \mathcal A_b.
  \]
  Therefore, setting $\nu=\sum_{k=1}^\ell \Pro_{u_k}$ we obtain
  \[
  \E S(d)=\int_{[0,n]} d^*(z)\dif\nu(z),
  \qquad d\in \mathcal A_b.
  \]
  Recalling~\eqref{eq:cut-off} we observe that
  \[
  \E S(\widetilde a) = \frac{1}{\ell}\int_{[0,\ell)}a^*(s)\dif s\cdot\nu[0,\sigma(h(\ell))).
  \]
  
  Having now introduced the necessary tools for the proof of \eqref{eq:Sa and Satilde}, we first proceed by proving the upper estimate. Observe that with $C:=8+16C_G$, we can write
  \begin{equation} \label{eq:tau-identity}
    C\E S(a)-\E S(\widetilde a)=\int_{[0,n]} a^*(x)\dif \tau(x)
  \end{equation}
  with the signed measure $\dif \tau(x)=C\dif\nu(x)-\frac{\nu[0,\sigma(h(\ell)))}{\ell}\dif \eta(x)$, where $\eta$ is the Lebesgue measure on $[0,\ell)$.
  We have shown in Proposition~\ref{prop:a_t_tilde} that
  \[
  C \nu[0,t)=C\E S(a_t)\geq \E S(\widetilde{a_t})=\frac{\min\{t, \ell\}}{\ell}\nu[0,\sigma(h(\ell))),
  \qquad t \in U,
  \]
  i.e., $\tau[0,t)\geq 0$ for all $t \in U$.
  We will now show that $\tau[0,t) \geq 0$ for all $t \in [0,n]$.
  To this end, let $t \in U^c$.
  Define $u_0 = \inf\{u \geq t\, :\, u \in U\}$ and note that $u_0 \in U$, since $U$ is closed by
  Lemma~\ref{lem:U-properties}.
  Observe that by~\eqref{item:lem:U-properties-2} of Lemma~\ref{lem:U-properties}
  $(t-\varepsilon, u_0) \subset U^c$ for some $\varepsilon > 0$.
  Hence, \eqref{item:lem:U-properties-4} of Lemma~\ref{lem:U-properties} implies that $b^*$ is
  constant on $(t-\varepsilon, u_0)$, which means by definition of $h$ that
  $h(t) = h(u_0)$.
  As a consequence we obtain
  \begin{equation*}
    C\nu[0,t)
    = C \nu[0,u_0)
    \geq \frac{\min\{u_0, \ell\}}{\ell} \nu[0,\sigma(h(\ell)))
    \geq \frac{\min\{t, \ell\}}{\ell} \nu[0,\sigma(h(\ell))),
  \end{equation*}
  i.e. $\tau[0,t) \geq 0$ for all $t\in [0,n]$.
  Applying Lemma~\ref{lem:continuous_partial_sums_lemma} to the right hand side of~\eqref{eq:tau-identity} we obtain
  \begin{equation*}
    C\E S(a)-\E S(\widetilde a) \geq 0,
  \end{equation*}
  which concludes the proof of the upper estimate.
  
  The proof of the lower estimate in \eqref{eq:Sa and Satilde} follows along the same lines by just employing Proposition~\ref{prop:a_t_tilde-lower} instead of
  Proposition~\ref{prop:a_t_tilde} and using an appropriate signed measure different than $\tau$. 
  
\end{proof}

\subsection{Conclusion}\label{subsec:conclusion}

As we have seen, we can reduce the case of general $a$ to multiples of functions only taking values zero and one.  We now use this fact to prove Theorem~\ref{thm:main}.

\begin{proof}[Proof of Theorem~\ref{thm:main}]
  First we prove the lower estimate.
  Observe that Theorem~\ref{thm:average_a} and the definition of $\widetilde{a}$ yield
  \begin{equation} \label{eq:proof_main_thm:1}
  \begin{aligned}
    C_1\cdot \E \sum_{k=1}^\ell \kmax_{1\leq i \leq n} a(i,g(i)) & \geq \E \sum_{k=1}^\ell \kmax_{1\leq i \leq n} \widetilde a(i,g(i)) \\
    & = \frac{1}{\ell}\int_0^\ell a^*(t)\dif t \cdot \Big(\E \sum_{k=1}^\ell \kmax_{1\leq i \leq n} \mathbbm 1_{h(\ell)}\Big),
   \end{aligned} 
  \end{equation}
  where $C_1=8(1+2C_G)$.
  If $\ell = 1$, then Corollary~\ref{cor:CG min} implies
  \begin{align*}
    C_1\E \max_{1\leq i \leq n} a(i,g(i))
    & \geq  \int_0^1 a^*(t)\dif t \cdot \E \max_{1\leq i \leq n} \mathbbm
    1_{h(1)}(i,g(i))\\
    & \geq \int_0^1 a^*(t)\dif t \cdot \Pro(X_1 \geq 1)\\
    & \geq \frac{1}{8C_G} \int_0^1 a^*(t)\dif t.
  \end{align*}
  For $\ell \geq 2$ we use Corollary~\ref{cor:lower estimate k-max} and see
  \begin{align*}
    C_1\cdot \E \sum_{k=1}^\ell \kmax_{1\leq i \leq n} a(i,g(i))
    &\geq  \frac{1}{\ell}\int_0^\ell a^*(t)\dif t \cdot \Big(\E
      \sum_{k=1}^{\ell/2} \kmax_{1\leq i \leq n} \mathbbm 1_{h(\ell)}(i,g(i))\Big)\\
    &\geq \frac{1}{6(1 +2 C_G)} \int_0^\ell a^*(t)\dif t,
  \end{align*}
  which proves the lower estimate.
  
  Now, we proceed with the upper estimate.
  For all $\ell \geq 1$ we have by Theorem~\ref{thm:average_a}
  \begin{align*}
    c_1 \cdot\E \sum_{k=1}^\ell \kmax_{1\leq i \leq n} a(i,g(i))
    & \leq  \E \sum_{k=1}^\ell \kmax_{1\leq i \leq n} \widetilde a(i,g(i))\\
    & =  \frac{1}{\ell} \int_0^\ell a^*(t)\dif t\cdot \E \sum_{k=1}^\ell
    \kmax_{1\leq i \leq n} \mathbbm 1_{h(\ell)}(i,g(i))\\
    & \leq  \int_0^\ell a^*(t)\dif t,
  \end{align*}
  where $c_1=1/(6 + 12 C_G)$. This concludes the proof of the theorem.
\end{proof}

\section{An application to Orlicz spaces} \label{sec:applications}

We will present an application of our main result (cf. Theorem~\ref{thm:main}) dealing with averages of order statistics on random sequences. The expressions for the bounds on the expectations that we obtain for \eqref{eq:k decreasing rearrangement random variables} are in terms of Orlicz norms and rather simple (cf. Theorem~\ref{thm:main sequences of random variables}). Note that by our ``direct'' approach, we recover Corollaries $2$ and $3$ from \cite{GLSW1}. 

We will estimate the following expression: 

\begin{equation*} 
  \E \sum_{k=1}^\ell \kmax_{1\leq i \leq n} |x_iX_i|,
\end{equation*}
where $X_1,\dots,X_n$ are independent copies of a random variable $X:(\Omega,\mu) \to \R$ with
$\E|X|< \infty$. Those expressions were already studied in \cite{GLSW1,GLSW2}. There, the argument
in the proof is built upon an estimate involving only the largest order statistic and combinatorial
results that were obtained in \cite{KS1,KS2}.
However, with Theorem~\ref{thm:main}, problems of this form can be approached directly.

Recall that $M^*$ in Theorem~\ref{thm:main sequences of random variables} is defined by 
\begin{equation*}
  M^*\Big( \int_0^\beta X^*(y) \dif y\Big) = \frac{\beta}{\ell}, \qquad 0 \leq \beta \leq 1.
\end{equation*}
The following Lemma is a continuous version of Lemma 2.1 in \cite{KS1} and our proof follows along the same lines. 

\begin{lemma}\label{lem:extreme points balls}
  Let
  \begin{equation} \label{eq:convex hull}
    B=\con \bigg\{ \Big(\varepsilon_i \int_0^{\alpha_i}X^*(y) \dif y \Big)_{i=1}^n : \varepsilon_i = \pm 1 , \sum_{i=1}^n \alpha_i = \ell \bigg\}. 
  \end{equation}
  Then we have
  \[
  B \subset B_{M^*} \subset 3 B.
  \]
\end{lemma}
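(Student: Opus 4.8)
The plan is to identify the norm whose unit ball is the convex hull $B$, and then compare it with $\|\cdot\|_{M^*}$ using the dual formulation~\eqref{eq:dualOrlicz} together with the defining relation $M^*(\int_0^\beta X^*(y)\dif y) = \beta/\ell$. First I would compute the support function (gauge) of $B$. Since $B$ is the convex hull of the points $(\varepsilon_i\int_0^{\alpha_i}X^*)_{i=1}^n$ over sign choices $\varepsilon_i=\pm1$ and over $\alpha_i\ge 0$ with $\sum\alpha_i=\ell$, for a vector $y\in\R^n$ one has
\[
\sup_{x\in B}\langle x,y\rangle = \sup\Big\{\sum_{i=1}^n |y_i|\int_0^{\alpha_i}X^*(z)\dif z : \alpha_i\ge 0,\ \sum_{i=1}^n\alpha_i=\ell\Big\}.
\]
The inner optimization is a concave maximization (each $\alpha\mapsto\int_0^\alpha X^*$ is concave since $X^*$ is decreasing) subject to a simplex constraint, so by a Lagrange/rearrangement argument the optimal allocation puts mass where $|y_i|$ is large against the steepest part of the integrand; this is exactly the structure that produces the expression $\int_0^\ell(|y|\,\mathbbm1)^*$-type integrals, i.e. $\sum_{k=1}^\ell \kmax |y_i|X^*(\cdot)$ evaluated appropriately. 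The key point is that this support function, call it $\|y\|_{B^\circ}$, can be rewritten so that comparing it to $\sup_{g\in B_{M^*}}\langle y,g\rangle$ (equivalently to $\|y\|_{M^{**}}=\|y\|_M$ up to the factor $2$ in~\eqref{eq:dualOrlicz}) reduces to a one-dimensional comparison.

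Next I would prove the two inclusions separately. For $B\subset B_{M^*}$: it suffices to check that each extreme generator $x=(\varepsilon_i\int_0^{\alpha_i}X^*)_i$ lies in $B_{M^*}$, i.e. $\sum_i M^*(|x_i|)\le 1$ when $\sum\alpha_i=\ell$ (using the Luxemburg norm and that $M^*$ is convex with $M^*(0)=0$). But $M^*(|x_i|)=M^*(\int_0^{\alpha_i}X^*)=\alpha_i/\ell$ by the defining equation, so $\sum_i M^*(|x_i|)=\frac1\ell\sum_i\alpha_i=1$, giving $x\in B_{M^*}$; convexity of $B_{M^*}$ then yields $B\subset B_{M^*}$. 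For $B_{M^*}\subset 3B$: given $x\in B_{M^*}$, so $\sum_i M^*(|x_i|)\le 1$, I need to write (a fixed multiple of) $x$ as a convex combination of generators of $B$. The natural move, following Kwapie\'n--Sch\"utt, is to set $\alpha_i$ so that $M^*(|x_i|)=\alpha_i/\ell$, i.e. $\int_0^{\alpha_i}X^*=|x_i|$ whenever $|x_i|$ is in the range of that increasing function; then $\sum_i\alpha_i=\ell\sum_i M^*(|x_i|)\le\ell$, so after scaling the $\alpha_i$ up to sum exactly $\ell$ (which only increases each $\int_0^{\alpha_i}X^*$, hence dominates $|x_i|$ coordinatewise), $x$ lies in the solid convex hull, and a standard splitting into at most a few sign-and-allocation patterns costs the constant $3$. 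The factor $3$ rather than $1$ comes precisely from the coordinates where $|x_i|$ exceeds $\int_0^1 X^*$ (the maximum attainable value of a single coordinate's integrand), which must be handled by spreading the excess over several generators — this is the one-dimensional truncation phenomenon, and it is the same place the constant $3$ appears in Lemma~2.1 of~\cite{KS1}.

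The main obstacle I anticipate is the lower inclusion $B_{M^*}\subset 3B$, specifically making the "spread the excess" argument rigorous when some $|x_i|$ are large (larger than $\int_0^1 X^*$) or when $X^*$ is unbounded near $0$ so that $\int_0^{\alpha}X^*$ may fail to reach certain values — one must argue that even in that case the $\alpha_i$ can be chosen (using continuity and monotonicity of $\alpha\mapsto\int_0^\alpha X^*$, and the convention $\inf\emptyset=\infty$ only affecting a null set) so that the coordinatewise domination holds, and then invoke that membership in $B$ is equivalent to membership in its solid hull $\{y:\exists\,x\in B,\ |y_i|\le|x_i|\ \forall i\}$ up to the claimed constant. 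A secondary, more routine point is verifying that $B$ (hence $B_{M^*}$, hence $3B$) is genuinely solid and symmetric, so that the reduction to coordinatewise comparisons is legitimate; this follows from the sign freedom $\varepsilon_i=\pm1$ and from the fact that decreasing one $\alpha_i$ and increasing another keeps $\sum\alpha_i=\ell$. Everything else is a direct computation with the defining equation for $M^*$ and the duality~\eqref{eq:dualOrlicz}.
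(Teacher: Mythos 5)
Your first inclusion is exactly the paper's argument (each generator satisfies $\sum_i M^*(|x_i|)=\sum_i\alpha_i/\ell=1$, then use convexity of $B_{M^*}$), and your exact-inversion idea for the reverse inclusion is sound as far as it goes: if every coordinate satisfies $|x_i|\le\int_0^1 X^*(y)\dif y=\E|X|$, then choosing $\alpha_i$ with $\int_0^{\alpha_i}X^*(y)\dif y=|x_i|$ gives, via \eqref{eq:definition M star}, $\sum_i\alpha_i=\ell\sum_i M^*(|x_i|)\le\ell$, and after enlarging the $\alpha_i$ and using that $B$ is solid (sign symmetry plus convexity) you even get $x\in B$ with constant $1$. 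Note that this is a different route from the paper, which discretizes in the spirit of \cite{KS1}: it splits off the coordinates with $M^*(z_i)\le 1/n$ (dominated by the vector all of whose coordinates equal $\int_0^{\ell/n}X^*(y)\dif y$, which lies in $B$), and for the remaining coordinates chooses integers $k_i$ with $k_i/n\le M^*(z_i)\le(k_i+1)/n$ and uses $\int_0^{2\alpha}X^*\le 2\int_0^{\alpha}X^*$; the constant $3=1+2$ arises exactly from this split plus the factor-$2$ loss in the discretization, not, as you assert, from coordinates exceeding $\int_0^1 X^*(y)\dif y$.

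The genuine gap is the step you yourself flag as the main obstacle: coordinates with $|x_i|>\int_0^1 X^*(y)\dif y$ cannot be handled by ``spreading the excess over several generators''. Since $X$ lives on a probability space, $X^*$ vanishes on $[1,\infty)$, so every generator in \eqref{eq:convex hull} — and hence every point of $B$, being a convex combination — has all coordinates at most $\int_0^1 X^*(y)\dif y$. Consequently the only way such an $x$ can lie in $3B$ is through the dilation factor, i.e.\ one needs $|x_i|\le 3\int_0^1 X^*(y)\dif y$ whenever $\sum_j M^*(|x_j|)\le 1$; no choice of signs or of the $\alpha_i$ can substitute for this. The case is not vacuous: \eqref{eq:definition M star} only gives $M^*\big(\int_0^1 X^*(y)\dif y\big)=1/\ell<1$ for $\ell\ge 2$, so membership in $B_{M^*}$ does not by itself exclude such coordinates, and ruling them out (or bounding them by $3\E|X|$) requires quantitative information about $M^*$ \emph{above} $\int_0^1 X^*(y)\dif y$, a region where the defining relation says nothing — it must come from how $M$ is actually defined for all $s$ (e.g.\ the formula $M(s)=\int_0^s\int_{|X|\ge 1/(t\ell)}|X|\dif\Pro\dif t$ forces $M'(s)\le\E|X|$, hence $M^*(s)=\infty$ for $s>\E|X|$, so such coordinates simply do not occur). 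You never make any argument of this kind, so your proof of $B_{M^*}\subset 3B$ is incomplete precisely there; either add such a reduction to the in-range regime (where your constant-$1$ argument works) or follow the paper's discretization. The preliminary support-function computation, on the other hand, is never used and can be dropped.
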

\begin{proof}
  First, we show the left inclusion. Let $z\in B$. Then
  \[
  \sum_{i=1}^n M^*(|z_i|) = \sum_{i=1}^n M^* \Big(\int_0^{\alpha_i}X^*(y) \dif y \Big) = \sum_{i=1}^n \alpha_i/\ell= 1.
  \]
  To show the other inclusion, let $z_1 \geq \dots \geq z_n>0$ and 
  \[
  \sum_{i=1}^n M^*(z_i) = 1.
  \]
  We write $z = z'+z'' = (z_1,\dots,z_r,0\dots,0)+ (0,\dots,0,z_{r+1},\dots,z_n)$, where $r$ is chosen such that $M^*(z_i)>1/n$ for all $1\leq i \leq r$, and $M^*(z_i)\leq 1/n$ for all $r\geq i+1$.
  
  We have
  \[
  M^* \Big( \int_{0}^{\ell/n} X^*(y) \dif y \Big) = \frac{1}{n}.
  \]
  Therefore, $z'' \leq \big(\int_{0}^{\ell/n} X^*(y) \dif y, \dots, \int_{0}^{\ell/n} X^*(y) \dif y \big)=:w\in\R^n$. Since $w \in B$, we also have $z''\in B$.
  
  It is now left to show that $z'\in B$. There exist indices $k_i\geq 1$ for $1\leq i \leq r$ such that
  \begin{equation} \label{eq: M^* estimate k_i}
    \frac{k_i}{n} \leq M^*(z_i) \leq \frac{k_i+1}{n}.
  \end{equation}
  Since
  \[
  \sum_{i=1}^r \frac{k_i}{n} = \sum_{i=1}^r M^*\Big(\int_0^{\ell k_i/n}X^*(y)\dif y \Big) \leq \sum_{i=1}^r M^*(z_i) \leq 1,
  \]
  and $\sum_{i=1}^r \ell k_i/n \leq \ell$, we immediately obtain
  \[
  \Big( \int_0^{\ell k_1/n}X^*(y)\dif y,\dots, \int_0^{\ell k_r/n}X^*(y)\dif y,0\dots,0\Big) \in B.
  \]
  Using \eqref{eq: M^* estimate k_i}, we see that
  \begin{align*}
    2z & = \Big( 2\int_0^{\ell k_1/n}X^*(y)\dif y,\dots, 2\int_0^{\ell k_r/n}X^*(y)\dif y,0\dots,0\Big) \\
    & \geq \Big( \int_0^{2\ell k_1/n}X^*(y)\dif y,\dots, \int_0^{2\ell k_r/n}X^*(y)\dif y,0\dots,0\Big) \geq z'.
  \end{align*}
  Thus $z' \in 2B$. We conclude that $z\in 3B$.
\end{proof}

\begin{proof}[Proof of Theorem~\ref{thm:main sequences of random variables}]
  We will assume that the independence of $X_1,\dots, X_n$ is realized through $n$ factors with $G=\Omega^n$ and $X_i(g)=X(g(i))$ for $g\in G$ and a canonical random variable $X$ with the same distribution as $X_1,\dots,X_n$. This first means that
  \[
  \E \sum_{k=1}^\ell \kmax_{1\leq i \leq n} |x_iX_i| = \E_{G} \sum_{k=1}^\ell \kmax_{1\leq i \leq n}|x_iX(g(i))|.
  \]
  Defining $a:\{1,\dots,n\}\times \Omega \to \R$ by 
  \[
  a(i,\omega):= x_i X(\omega),
  \]
  and setting
  $\Pro=\bigotimes_{i=1}^n \mu,$
  we obtain by Theorem~\ref{thm:main}
  \begin{equation*} 
    \E \sum_{k=1}^\ell \kmax_{1\leq i \leq n} |x_iX_i| = \int_G \sum_{k=1}^\ell \kmax_{1\leq i \leq n} |a(i,g(i))| \dif \Pro(g)\simeq \int_0^\ell a^*(t)\dif t.
  \end{equation*}
  Observe that we also have 
  \begin{equation*} 
    \int_0^\ell a^*(t)\dif t = \sup_{\sum \alpha_i=\ell} \sum_{i=1}^n x_i \int_0^{\alpha_i} X^*(t)\dif t,
  \end{equation*}
  by approximation of $X$ with simple functions. Therefore, 
  \[
  \E \sum_{k=1}^\ell \kmax_{1\leq i \leq n} |x_iX_i| \simeq \sup_{\sum \alpha_i=\ell} \sum_{i=1}^n x_i \int_0^{\alpha_i} X^*(t)\dif t.
  \]
  With $B$ as in \eqref{eq:convex hull}, using Lemma \ref{lem:extreme points balls} and~\eqref{eq:dualOrlicz}, we further obtain 
  \[
  \E \sum_{k=1}^\ell \kmax_{1\leq i \leq n} |x_iX_i| \simeq \sup_{y\in \ext B} \sum_{i=1}^n x_iy_i \simeq \sup_{y\in B_{M^*}} \sum_{i=1}^n x_iy_i \simeq \|x\|_{M}.
  \]
  This concludes the proof.
\end{proof}

\section{An application to the Local Theory of Banach spaces} \label{sec:minimal set of maps}

In this last section we present an example of a set of maps with a minimal number of elements satisfying conditions \eqref{eq:condition 1 cor} and \eqref{eq:condition 2 cor} in Corollary \ref{thm:main LPP}. This is then used to embed certain Orlicz sequence spaces $\ell_M^n$ into $\ell_1^{cn^3}$ using the ``standard'' embedding, which usually provides an embedding into $\ell_1^{n!2^n}$. 

Recall that given two normed spaces $X,Y$ and some constant $C\geq 1$, we say that $X$ $C$-embeds into $Y$ and write $X\stackrel{C}{\hookrightarrow} Y$ if there exists a one to one linear operator $\Psi:X\to \Psi(X)\subseteq Y$ such that $\|\Psi\|\cdot\|\Psi^{-1}\|\leq C$.
Given two isomorphic Banach spaces $X$ and $Y$ the Banach-Mazur distance of $X$ and $Y$ is defined as
    \[
      d_{\textrm{BM}}(X,Y) = \inf\left\{ \|T\|\|T^{-1}\| \,:\, T\in L(X,Y) ~ \hbox{isomorphism} \right\}.
    \]

Before we continue, let us give some historical remarks. The problem, given an $n$-dimensional subspace $X$ of $L_1([0,1],dx)$ and $\varepsilon>0$, what is the smallest $N=N(X,\varepsilon)$ such that there is a subspace $Y$ of $\ell_1^N$ with $d_{\textrm{BM}}(X,Y) \leq 1+\varepsilon$, is extensively studied in the literature. A first breakthrough was made by G. Schechtman in \cite{Sch87}, proving that
\[
N \leq \frac{C}{\varepsilon^2}\log(\varepsilon^{-1})\cdot n^2,
\]
$C>0$ being an absolute constant. 

Based on his work, in \cite{BoLM2} the bound in the dimension was improved to
\[
N \leq \frac{C}{\varepsilon^2}\log(n\varepsilon^{-1})(\log n)^2\cdot n
\]
Later, M. Talagrand in \cite{T90} proved that for any $0< \varepsilon \leq \varepsilon_0$
\begin{equation}\label{eq:talagrand}
N \leq C K(X)^2\varepsilon^{-2}\cdot n,
\end{equation}
where $K(X)$ denotes the $K$-convexity constant of $X$ (cf. \cite{MiSc}). Recall that G. Pisier proved in \cite{Pi80} that if $X\subseteq L_1([0,1],dx)$ and $\dim(X)=n$, then $K(X) \leq C \sqrt{\log n}$. Thus \eqref{eq:talagrand} improves on previous results and gives
\begin{equation}\label{eq:talagrand 2}
N \leq C\varepsilon^{-2}\log(n)n.
\end{equation}
For further information we also refer to
the work of W. B. Johnson and G. Schechtman \cite{JS}, A. Naor and A. Zvavitch \cite{NZ}, J. Bernu\'es and M. L\'opez-Valdes, O. Friedland and O. Gu\'edon, as well as the references therein.

Now, recall that in \cite{BrDa}, using the theorem of de Finetti, J. Bretagnolle and D. Dacunha-Castelle proved that an Orlicz space $\ell_M$ is isomorphic to a subspace of $L_1$ if and only if $M$ is equivalent to a $2$-concave Orlicz function. The corresponding finite-dimensional result was proved by S. Kwapie\'n and C. Sch\"utt in \cite{KS1,S2}. In combination with the previous results, this shows that $\ell_M^n$ $C$-embeds into $\ell_1^{c\log(n)n}$.

The proofs of the results mentioned above are very involved and quite technical.
The embedding we present here is specific and rather simple, with an offset in the dimension $N$.

\subsection{A minimal set of maps}

Let $n$ be a power of a prime number and let $\mathbb F_n$ denote the field with $n$ elements.
We define $I_0 = \Omega_0 = \mathbb F_n$ and denote by $\mu_0$ the probability measure on $\Omega_0$ defined
by $\mu_0(\{i\}) = \frac{1}{n}$, for all $i \in \mathbb F_n$.
If we set $G_0 = \{g_{\ell m}\, :\, \ell,m \in \mathbb F_n\}$, where $g_{\ell m}(i) = \ell i +m$ and multiplication and addition is performed in $\mathbb F_n$, then the
probability measure $\Pro_0$ on $G_0$ given by $\Pro_0(\{g\}) = \frac{1}{n^2}$, for all
$g\in G_0$ satisfies conditions~\eqref{eq:condition 1} and~\eqref{eq:condition 2} of
Theorem~\ref{thm:main} with $C_{G_0} = 1$, that is
\begin{enumerate}[(i)]
\item $\Pro_0(g(i) = j) = \frac{1}{n}$, for all $(i,j)\in I_0\times \Omega_0$,
\item $\Pro_0(g(i_1) = j_1, g(i_2) = j_2 ) \leq \frac{1}{n^2}$, for all $(i_1,j_1)\neq(i_2,j_2)$ in $I_0\times \Omega_0$.
\end{enumerate}
We want to point out that the above $G_0$ consists of $n^2$ elements.

We first prove condition (i) and let $(i,j)\in I_0\times \Omega_0$. Then, for an arbitrary $\ell\in \mathbb F_n$, there exists exactly one $m\in \mathbb F_n$, which is given by $m=j-\ell i$ such that $g_{\ell m}(i)=\ell i+m=j$. Therefore, condition (i) is satisfied.
For condition (ii) we note that for all different tuples $(i_1,j_1),(i_2,j_2)\in I_0\times \Omega_0$, in order to have $g_{\ell m}(i_1)=\ell i_1+m=j_1$ and $g_{\ell m}(i_2)=\ell i_2+m=j_2$ for some $\ell,m\in\mathbb F_n$, it is necessary that $i_1\neq j_2$ and in this case $\ell$ is given (uniquely) by $\ell=(j_1-j_2)(i_1-i_2)^{-1}$ and $m=j_1-\ell i_1=j_2-\ell i_2$. Therefore, the event $\{g\in G_0 : g(i_1)=j_1,g(i_2)=j_2\}$ consists of at most one element and the definition of $\mathbb P_0$ implies condition (ii).

In general, we have the following result.
Let $n\in \mathbb N$, define $I_1 = \Omega_1 = \{1,\ldots,n\}$ and set
$\mu_1(\{i\}) = \frac{1}{n}$, for all $1\leq i \leq n$.
If $G_1$ and $\mathbb P_1$ satisfy conditions~\eqref{eq:condition 1} and~\eqref{eq:condition 2} of
Theorem~\ref{thm:main} for some constant $C_{G_1} \geq 1$, then $G_1$ consists of at least
$\frac{n^2}{C_{G_1}}$ elements.

Indeed, assume that conditions (i) and (ii) are satisfied with some constant $C_{G_1}\geq 1$ and assume that $G_1$ consists of less than $n^2/C_{G_1}$ elements. Then, there exists at least one element $g_1\in G_1$ such that $\Pro(\{g_1\}) > C_{G_1} / n^2$. Since, for the choice $j_1=g_1(i_1), j_2=g_1(i_2)$ and arbitrary different $i_1,i_2\in \mathbb F_n$, this $g$ is an element of the event $\{g\in G_1 : g(i_1)=j_1,g(i_2)=j_2\}$. Therefore, by (ii), we get the contradiction
\[
\frac{C_{G_1}}{n^2} < \Pro(\{g_1\}) \leq \Pro(\{g\in G_1 : g(i_1)=j_1,g(i_2)=j_2\}) \leq \frac{C_{G_1}}{n^2}
\]

This shows that, up to a constant factor, the set of functions $G_0$ has the least number of
elements satisfying conditions~\eqref{eq:condition 1} and~\eqref{eq:condition 2} of
Theorem~\ref{thm:main}.

\subsection{Embedding $\ell_M^n$ into $\ell_1^{cn^3}$}\label{subsec:local theory}

As an application to Banach space theory, we will now apply Theorem~\ref{thm:main} to $I_0$, $\Omega_0$, $\mu_0$, $G_0$ and $\Pro_0$, defined as above, and prove Theorem \ref{thm:application}. We start by explaining the rough idea before going through the details. Let $M$ be a strictly convex, twice differentiable Orlicz function that is strictly $2$-concave.
We will show that the Orlicz sequence space $\ell_M^n$ $C$-embeds into $\ell_1^{cn^3}$, where $c$ and $C$ are absolute constants independent of $n$ and $M$. This should be compared with the ``standard'' embedding of $\ell_M^n$ into $\ell_1^{n!2^n}$. Recall that the standard embedding (cf. \cite{S2}) is given by 
\[
\Psi_n:\ell_M^n \to \ell_1^{n!2^n},\qquad x \mapsto \frac{1}{n!2^n} \left(\sum_{i=1}^n \varepsilon_i a_{\pi(i)}x_i\right)_{\pi,\varepsilon},
\]
where $a=a(M)\in\R^n$ is chosen in such a way that it generates the Orlicz norm, i.e., 
\[
\frac{1}{n!}\sum_{\pi\in\mathfrak S_n}\left(\sum_{i=1}^n |x_ia_{\pi(i)}|^2 \right)^{1/2} \simeq \|x\|_M.
\]
Indeed, using Khintchine's inequality we then obtain that
\begin{align*}
\|\Psi_n(x)\|_1 & = \frac{1}{n!2^n}\sum_{\pi,\varepsilon} \Big| \sum_{i=1}^n \varepsilon_i a_{\pi(i)}x_i \Big| \\
& \simeq \frac{1}{n!}\sum_{\pi\in\mathfrak S_n}\left(\sum_{i=1}^n |x_ia_{\pi(i)}|^2 \right)^{1/2} \\
& \simeq \|x\|_M.
\end{align*}

So the standard embedding combines Khintchine's inequality with an average over the symmetric group $\mathfrak S_n$, which explains the dimension $n!2^n$ (see also \cite{S1,PS,P,P2} for embeddings of other types of spaces into $L_1$). Instead of taking an average over the whole symmetric group, we rather use our minimal set of maps $G_0$, which has cardinality $n^2$ only, thus obtaining an embedding into $\ell_1^{n^22^n}$. To further decrease the dimension, we will then make use of a result due to J. Bourgain, J. Lindenstrauss, and V.D. Milman that allows to use only $cn$ sign vectors instead of $2^n$. 

Now let us be more precise. To find the sequence $(a_i)_{i=1}^n$ of scalars that generates the Orlicz norm, we use the following result due to C. Sch\"utt (cf. \cite[Theorem 2]{S2}): if $M$ is a strictly convex, twice differentiable Orlicz function that is strictly $2$-concave and satisfies $M^*(1)=1$, then there exists a sequence $a_1,\dots,a_n$ of scalars such that for all $x\in\R^n$,
\begin{equation}\label{eq:schuett average}
\frac{1}{c}\|x\|_M \leq \frac{1}{n!}\sum_{\pi\mathfrak S_n} \Big( \sum_{i=1}^n |x_ia_{\pi(i)}|^2 \Big)^{1/2} \leq c \|x\|_M,
\end{equation}
where $c$ is a constant that does not depend on $n$ and $M$. As a matter of fact, in \cite[Theorem 2]{S2} an explicit formula for the choice of $a$ is given.

J. Bourgain, J. Lindenstrauss, and V. D. Milman \cite{BLM} proved the following: if $v_1,\dots,v_n$ are unit vectors in a normed space $(\R^n,\|\cdot\|)$, then, for all $\delta>0$, there exists a constant $C(\delta)>0$ and $N=C(\delta)n$ sign vectors $\varepsilon^1,\dots,\varepsilon^N\in\{-1,1\}^n$ such that for all $x\in\R^n$
\begin{equation}\label{eq:BLM}
(1-\delta)\ave_{\pm}\Big\|\sum_{i=1}^n \pm x_iv_i \Big\| \leq \frac{1}{N}\sum_{j=1}^N \Big\| \sum_{i=1}^n \varepsilon^j_i x_iv_i \Big\| \leq (1+\delta)\ave_{\pm}\Big\|\sum_{i=1}^n \pm x_iv_i \Big\|.
\end{equation}
For our purpose, it is enough to know that in the setting $v_1=v_2=\dots=v_n=e_1$, where $e_1$ is the first standard unit vector of $\R^n$, $\|\cdot\|=\|\cdot \|_1$, and say $\delta=1/4$, there exists a choice of $N$ sign vectors that satisfy \eqref{eq:BLM}.


The last ingredient is a special case of a result we recently obtained in \cite[Theorem 1.4]{LPP}) and reads as follows:  let $n\in\N$, $a\in \R^{n\times n}$, and $1\leq p < \infty$. Let $G$ be a collection of maps from $I=\{1,\dots,n\}$ to $I$ and $C_G>0$ be a constant only depending on $G$. Assume that for all $i,j\in I$ and all different pairs $(i_1,j_1),(i_2,j_2)\in I\times I$ 
\begin{enumerate}[(i)]
\item $\Pro(\{g\in G: g(i)=j\})=1/n$,
\item $\Pro(\{g\in G: g(i_1)=j_1, g(i_2)=j_2 \}) \leq C_G/n^2$.
\end{enumerate}
Then
\begin{align*}
C\bigg[ \frac{1}{n} \sum_{k=1}^n s(k) + \Big(\frac{1}{n}\sum_{k=n+1}^{n^2} s(k)^p\Big)^{1/p}\bigg] & \leq \E \Big( \sum_{i=1}^n |a_{ig(i)}|^p \Big)^{1/p} \\
& \leq \frac{1}{n} \sum_{k=1}^n s(k) + \Big(\frac{1}{n}\sum_{k=n+1}^{n^2} s(k)^p\Big)^{1/p},
\end{align*}
where $(s(k))_{k=1}^{n^2}$ is the decreasing rearrangement of $\{|a_{ij}|:i,j=1,\dots,n \}$ and $C>0$ is a constant only depending on $C_G$.

Note that the conditions in the theorem are satisfied for $G=G_0$, as was shown above, and for $G=\mathfrak S_n$ (cf. \cite[Example 1.2]{LPP}), although with different, but still absolute constants. This means that, especially for $p=2$,
\begin{equation}\label{eq:equivalence averages}
\frac{1}{n^2}\sum_{g\in G_0} \Big( \sum_{i=1}^n |a_{ig(i)}|^2 \Big)^{1/2} \simeq \frac{1}{n!}\sum_{\pi\in\mathfrak S_n} \Big( \sum_{i=1}^n |a_{i\pi(i)}|^2 \Big)^{1/2}
\end{equation}
for all $a\in\R^{n\times n}$.

Let us now prove the embedding result. 
\begin{proof}[Proof of Theorem \ref{thm:application}]
Let $G_0$ be our minimal set of maps. We define the isomorphism $\Psi_n$ by 
\[
\Psi_n:\ell_M^n \to \ell_1^{Cn^3},\qquad x\mapsto \frac{1}{Cn^3}\left(\sum_{i=1}^n \varepsilon^j_i a_{g(i)}x_i\right)_{g\in G_0,j=1,\dots,Cn}.
\]
Then a direct computation as shown in the standard embedding, now using equations \eqref{eq:schuett average}, \eqref{eq:BLM} (in the setting mentioned above), and \eqref{eq:equivalence averages}, shows that
\[
\| \Psi_n(x)\|_1 \simeq \|x\|_M.
\]
This means that there exist absolute constants $C,C_1>0$ such that for all $n\in\N$, $\ell_M^n$ $C_1$-embeds into $\ell_1^{Cn^3}$, where $C,C_1>0$ are independent of $M$.
\end{proof}

\subsection*{Acknowledgments}
R. Lechner is supported by the Austrian Science Fund, FWF P23987 and FWF P22549.
M. Passenbrunner is supported by the Austrian Science Fund, FWF P27723.
J. Prochno is supported by the Austrian Science Fund, FWFM 1628000.

\bibliographystyle{abbrv}
\bibliography{order_statistics_continuous_version}

\end{document}